\renewcommand{\subsection}[1]{\vspace{3mm}\refstepcounter{subsection}\noindent{\bf \thesubsection. #1.} }
\renewcommand{\subsubsection}[1]{\vspace{3mm}\refstepcounter{subsubsection}\noindent{\bf \thesubsubsection. #1.} }
\numberwithin{equation}{section}
\newtheorem{theorem}{Theorem}[section]
\newtheorem{lemma}[theorem]{Lemma}
\newtheorem{corollary}[theorem]{Corollary}
\newtheorem{proposition}[theorem]{Proposition}
\theoremstyle{definition}
\newtheorem{problem}[theorem]{Problem}
\newcommand{\Tr}{\operatorname{Tr}}
\renewcommand{\geq}{\geqslant}
\renewcommand{\leq}{\leqslant}
\renewcommand{\Im}{\operatorname{Im}}
\newcommand{\Osh}{{\mathcal O}}                        
\newcommand{\Ish}{{\mathcal I}}                        
\newcommand{\image}{\operatorname{image}}
\renewcommand{\H}{\mathrm{H}}                          
\newcommand{\N}{\mathrm{N}}                            
\newcommand{\id}{\operatorname{id}}                    
\newcommand{\R}{\mathrm{R}}                            
\newcommand{\Pic}{\operatorname{Pic}} 
\newcommand{\E}{\operatorname{E}}                            
\newcommand{\K}{\mathrm{K}}                            
\newcommand{\rank}{\operatorname{rank}}                
\newcommand{\x}{\mathbf{x}}  
\newcommand{\y}{\mathbf{y}} 
\newcommand{\z}{\mathbf{z}}  
\newcommand{\diag}{\operatorname{diag}}
\newcommand{\ii}{\operatorname{i}}  
\newcommand{\G}{\operatorname{G}}  
\newcommand{\reg}{\operatorname{reg}} 
\newcommand{\cc}{\operatorname{c}} 
\newcommand{\Chow}{\operatorname{A}} 
\renewcommand{\emptyset}{\varnothing}
\newcommand{\CC}{\mathbf{C}} 
\newcommand{\NN}{\mathbf{N}} 
\newcommand{\QQ}{\mathbf{Q}} 
\newcommand{\RR}{\mathbf{R}} 
\newcommand{\ZZ}{\mathbf{Z}} 
\newcommand{\CC}{\mathbb{C}} 
\newcommand{\NN}{\mathbb{N}} 
\newcommand{\QQ}{\mathbb{Q}} 
\newcommand{\RR}{\mathbb{R}} 
\newcommand{\ZZ}{\mathbb{Z}} 
\begin{document}

\title{Index conditions and cup-product maps on abelian varieties}

\author[N. Grieve]{Nathan Grieve}
\address{Department of Mathematics and Statistics, Queen's University,
Kingston, ON, Canada}
\email{nathangrieve@mast.queensu.ca}
\address[Present address]{ Department of Mathematics and Statistics,
McGill University,
Montreal, QC, Canada}
\email{ngrieve@math.mcgill.ca}
%
\thanks{\emph{Mathematics Subject Classification (2010): } Primary 14K05, Secondary 14F17.}
\thanks{Version as published in \emph{International Journal of Mathematics},
Vol. 25, No. 4, 1450036, 31 pages (2014)}
\maketitle

\begin{abstract}
We study questions surrounding cup-product maps which arise from pairs of non-degenerate line bundles on an abelian variety. Important to our work is Mumford's index theorem which we use to prove that non-degenerate line bundles exhibit positivity analogous to that of ample line bundles.  As an application we determine the asymptotic behaviour of families of cup-product maps and prove that vector bundles associated to these families are asymptotically globally generated. To illustrate our results we provide several examples.   For instance, we construct families of cup-product problems which result in a zero map on a one dimensional locus.  We also prove that the hypothesis of our results can be satisfied, in all possible instances, by a particular class of simple abelian varieties.  Finally, we discuss the extent to which Mumford's theta groups are applicable in our more general setting.
\end{abstract}
%
%
%

%
%
\section{Introduction}\label{intro}
Let $X$ be an abelian variety, defined over an algebraically closed field, and let $L$ and $M$ be ample line bundles on $X$.  Cup-product maps of the form
\begin{equation}\label{ample:cp:prod:intro}
\H^0(X,L) \otimes \H^0(X,M) \xrightarrow{\cup} \H^0(X, L \otimes M)
\end{equation} have
been extensively studied because they are related to the syzygies of $X$ with respect to a suitable projective embedding.   We now describe some parts of this story because it is related to what we do here.   

The starting point is work of D. Mumford \cite{MumI}, \cite{MumII}, \cite{MumIII}, and \cite{Mum:Quad:Eqns}.  In these papers, Mumford proved a number of results related to the moduli of abelian varieties, the syzygies of abelian varieties, and theta functions.   
Two important features to Mumford's approach were
the theta group of a line bundle 
and
the observation that cup-product maps of the form \eqref{ample:cp:prod:intro} can be studied by considering families of cup-product maps parametrized by the dual abelian variety. We discuss some aspects to Mumford's approach in more detail in \S \ref{theta:example}.  

Mumford's work led to many additional results and successful generalizations.  Notable examples include work of S. Koizumi \cite{Koi}, G. Kempf \cite{KempfAbVar}, G. Pareschi \cite{Par}, and Pareschi-Popa \cite{Par:Popa}, \cite{Par:Popa:II}.
These generalizations were related to questions which can 
be phrased in terms of the property $N_p$ of M. Green and R. Lazarsfeld  \cite{Green:Laz:Np}, \cite[p. 116]{Laz}.

For example, Lazarsfeld conjectured that if $A$ is an ample line bundle on a complex abelian variety $X$, then the line bundle $A^{\ell}$ satisfies property $N_p$ whenever $\ell \geq p + 3$, \cite[Conjecture 1.5.1, p. 516]{Laz:vb}.
This conjecture was proven by Pareschi, see \cite{Par}, and then later improved by Pareschi-Popa.  More specifically, using the fact that the line bundle $A^{\ell}$ satisfies property $N_p$ when $\ell \geq p + 3$, Pareschi-Popa proved that the line bundle $A^{\ell}$ satisfies property $N_p$ when $\ell = p + 2$, \cite[Theorem 6.2, p. 184]{Par:Popa:II}.  

Central to Pareschi's proof, and Pareschi-Popa's refinement, of Lazarsfeld's conjecture is the study of cup-product maps of the form 
$$\H^0(X, A^{n}) \otimes \H^0(X, E) \xrightarrow{\cup} \H^0(X, A^{n} \otimes E)$$
where $A$ is a globally generated ample line bundle on $X$ and where $E$ is a suitably defined vector bundle on $X$.  

To study these maps, Pareschi-Popa formulated \emph{index conditions}, for instance the IT, WIT, and PIT \cite[p. 653]{Par} and \cite[p. 179]{Par:Popa:II}, which were motivated by those of  S. Mukai \cite[p. 156]{Mukai-duality}.   They used these conditions to study families of cup-product maps of the form
\begin{equation}\label{P-P-family-cp}
\H^0(X, T^*_x(A^{ n})) \otimes \H^0(X, E) \xrightarrow{\cup} \H^0(X, T^*_x(A^{ n}) \otimes E) 
\end{equation}
as $x \in X$ varies. (If $x \in X$, then $T_x \colon X \rightarrow X$ denotes translation by $x$.)

As an example, Pareschi-Popa used their index conditions to relate the cup-product map \eqref{P-P-family-cp} to the fiberwise evaluation map of the vector bundle 
$$p_{1 *}(m^* A^{ n} \otimes p_{2 }^* E)$$
on $X$; here $p_i$ denotes the projection of $X \times X$ onto the first and second factors respectively (for $i = 1,2$) and $m$ denotes multiplication in the group law.
Pareschi-Popa then gave cohomological criteria for vector bundles on $X$ to be globally generated.  That certain cup-product maps of the form \eqref{P-P-family-cp} are surjective then became a question of applying this criteria to the vector bundle $p_{1 *}(m^* A^{n} \otimes p_{2 }^* E)$. 

Pareschi-Popa subsequently generalized their approach and went on to study families of cup-product maps having the form 
$$\H^0(X, T_x^* E) \otimes \H^0(X, F) \xrightarrow{\cup} \H^0(X, T^*_x(E) \otimes F) $$ where $E$ and $F$ are semi-homogenous vector bundles on $X$ and $x \in X$ varies \cite[\S 7.3]{Par:Popa:III}.

The purpose of this paper is to study matters related to cup-product maps arising from pairs of vector bundles on $X$ which satisfy the \emph{pair index condition} and have \emph{nonzero index}.
The \emph{pair index condition}, which we define in \S \ref{pair:index:condition:section}, allows the above described constructions of Pareschi-Popa to be generalized so as to apply more generally to this setting.

\subsection{Main Problems}
The problems we consider are made possible by work of Mumford concerning the cohomology groups of non-degenerate line bundles on $X$.  In more detail, let $L$ be a non-degenerate line bundle on $X$.  Equivalently, $L$ is a line bundle on $X$ which has nonzero Euler characteristic.  Mumford proved that 
there exists an integer $\ii(L)$ with the property that 
$$\H^j(X, L) = \begin{cases}
V_L & \text{for $j = \ii(L)$; and} \\
0 & \text{for $j \not = \ii(L)$,}
\end{cases}
$$  where $V_L$ is the unique irreducible weight $1$ representation of the theta group of $L$, see for instance \cite[\S 16 and p. 217]{Mum}, \cite[Theorem 2, p. 297]{MumI}, and \cite[p. 726]{Sek:77}. In addition, Mumford's index theorem asserts that if $A$ is an ample line bundle on $X$, then the roots of the polynomial $P(N) := \chi(A^N \otimes L) $ are real and $\ii(L)$ equals the number of positive roots counted with multiplicity, \cite[p. 145]{Mum}. 

To state our guiding problems  
let $L$ and $M$ be line bundles on $X$ which satisfy the condition that
\begin{equation}\label{intro:PIC}
\text{$\chi(L)$, $\chi(M)$, and $\chi(L \otimes M)$ are nonzero and $\ii(L \otimes M) = \ii(L) + \ii(M)$.}
\end{equation}

In this paper we study questions surrounding
cup-product maps of the form
\begin{equation}\label{intro:PIC:cup-product}
\H^{\ii(L)}(X,L) \otimes \H^{\ii(M)}(X,M) \xrightarrow{\cup} \H^{\ii(L \otimes M)}(X, L \otimes M) \end{equation} where $L$ and $M$ are line bundles on $X$ for which condition \eqref{intro:PIC} holds --- condition \eqref{intro:PIC} implies that the source and target space of the map \eqref{intro:PIC:cup-product} are nonzero.

\begin{problem}\label{cp-prod-intro-prob}  Let $X$ be an abelian variety of dimension $g$.  
\begin{enumerate}
\item{Let $L$ and $M$ be line bundles on $X$ which satisfy condition \eqref{intro:PIC}.  Describe the image of the cup-product map \eqref{intro:PIC:cup-product}.  For instance, is it nonzero?}
\item{For what $p,q \in \NN$, $p + q \leq g$, does $X$ admit line bundles $L$ and $M$ such that $\chi(L)$, $\chi(M)$, and $\chi(L \otimes M)$ are nonzero, $\ii(L) = p$, $\ii(M) = q$, and $\ii(L \otimes M) = p + q$?}
\end{enumerate}
\end{problem}

Over the complex numbers, a question related to Problem \ref{cp-prod-intro-prob} \emph{(b)} is

\begin{problem}\label{complex:pic}
Fix a lattice $\Lambda \subseteq \CC^g$ and $p, q \in \NN$.  Do there exists Hermitian forms 
$$\text{$H_i : \CC^g \times \CC^g \rightarrow \CC$, for $i = 1,2$,}$$ whose imaginary parts are integral on $\Lambda$, and have the properties that
\begin{itemize}
\item{the Hermitian forms $H_1$, $H_2$, and $H_1 + H_2$ are non-degenerate; and}
\item{the Hermitian forms $H_1$, $H_2$, and $H_1 + H_2$ have, respectively, exactly $p$, $q$, and $p + q$ negative eigenvalues?}
\end{itemize} 
\end{problem}

\subsection{Summary of results and organization of paper}
We prove several theorems
all of which are related to Problem \ref{cp-prod-intro-prob} and generalizations thereof.  

Theorems \ref{existence:prop} and \ref{Pair:index:theorem} address Problem \ref{cp-prod-intro-prob} \emph{(b)} and its relation to Problem \ref{complex:pic}.   For example, a consequence of Theorem \ref{existence:prop}, or the more general Theorem \ref{Pair:index:theorem}, is that there exists, for $g$ fixed, complex simple abelian varieties of dimension $g$ and having real multiplication by a totally real number field of degree $g$ over $\QQ$, for which every possible instance of condition \eqref{intro:PIC} can actually occur.  Both Theorems \ref{existence:prop} and \ref{Pair:index:theorem} rely on work of S. Shimura \cite{Shimura}, while Theorem \ref{Pair:index:theorem} also relies on work of Y. Matsushima \cite{Matsushima76}.

Theorem \ref{vanishing:thm} concerns the positivity of non-degenerate line bundles.  One implication of this theorem can be phrased in the language of naive $q$-ampleness as defined by B. Totaro \cite[p. 731]{Tot}.  In particular,  Corollary \ref{q-ample} asserts that 
if $L$ is a line bundle on $X$ with nonzero Euler characteristic and $\ii(L) \leq q$, then $L$ is naively $q$-ample. 
Notions of $q$-ampleness and partial positivity are subjects of independent interest --- 
the survey article \cite{Greb:Kur} provides a summary of some other results in this area.

Theorems \ref{Step2} and \ref{MainTheorem} are applications of Theorem \ref{vanishing:thm}.   These theorems show how the concept of naive $q$-ampleness, for $q > 0$, can be used to study higher cup-product maps on abelian varieties.  They also show that this concept is related to global generation of vector bundles on abelian varieties.  Let us now describe some aspects of Theorems \ref{Step2} and \ref{MainTheorem} in more detail.  

Theorem \ref{Step2} determines the \emph{asymptotic nature} of the \emph{pair index condition}.    
We define this condition in \S \ref{pair:index}; condition \eqref{intro:PIC} is a special case.  Theorem \ref{Step2}  implies that if 
$E$ and $F$ are vector bundles on $X$ and if $(L,M)$ is a pair of line bundles which satisfies condition \eqref{intro:PIC}, then the cup-product maps
\begin{equation}\label{Intro:CP}
\H^{\ii(L)}(X, T^*_x(L^n \otimes E)) \otimes \H^{\ii(M)}(X, M^n \otimes F) \xrightarrow{\cup} \H^{\ii(L \otimes M)}(X, T^*_x(L^n \otimes E) \otimes M^n \otimes F)
\end{equation}
have nonzero source and target space for all $x \in X$ and all $n \gg 0$.

Theorem \ref{MainTheorem} is our main result and addresses the behaviour of the cup-product map \eqref{Intro:CP}.  It implies that the map \eqref{Intro:CP} is nonzero and surjective for all $x \in X$ and all $n \gg 0$.  

A further consequence of Theorem \ref{MainTheorem} concerns global generation of a particular class of vector bundles.  More specifically, in Proposition \ref{mainprop1} we prove that the condition that the map \eqref{Intro:CP} is surjective, for $n$ fixed and all $x \in X$, is equivalent to the condition that the vector bundle
\begin{equation}\label{PP:glob}
\R^{\ii(L \otimes M)}_{p_1 *}(m^*(L^n \otimes E) \otimes p_2^*(M^n \otimes F))
\end{equation}
is globally generated.  Since Theorem \ref{MainTheorem} implies that the cup-product maps \eqref{Intro:CP} are surjective for all $n \gg 0$, it also implies that the vector bundle \eqref{PP:glob} is globally generated for all $n \gg 0$.

We prove Theorems \ref{vanishing:thm}, \ref{Step2}, and \ref{MainTheorem} in \S \ref{Proofs}.  These theorems apply to line bundles with nonzero index on abelian varieties defined over algebraically closed fields of positive characteristic. 
As hinted at above, one theme of our results is that non-degenerate line bundles with nonzero index exhibit positivity analogous to that of ample line bundles.   This positivity follows from, and can be expressed in terms of, Mumford's index theorem.

In \S \ref{ab-var-examples} we consider examples which illustrate Theorems \ref{vanishing:thm}, \ref{Step2}, and \ref{MainTheorem} as we now explain. 
When considering cup-product maps such as \eqref{intro:PIC:cup-product}, arising from a pair of line bundles $(L,M)$ satisfying condition \eqref{intro:PIC} and having nonzero index, there is no reason to expect that such maps should be nonzero.  In fact, in \S \ref{main:example} we construct $2$-dimensional families of such cup-product problems which result in a zero map on a $1$-dimensional locus.
In \S \ref{theta:example}, we discuss an approach, used by Mumford, to study cup-product maps arising from pairs of ample line bundles.  We discuss the extent to which these techniques are applicable in our more general situation.

\subsection{Acknowledgments} 
I thank my Ph.D. advisor Mike Roth for suggesting the problem to me, for many profitable conversations, and for providing feedback which enabled me to improve the exposition of this work.  
I also benefited from helpful conversations with Greg Smith, Ernst Kani, Valdemar Tsanov, and A.T. Huckleberry.  
This work was conducted while I was a Ph.D. student at Queen's University, where I was financially supported by several Ontario Graduate Scholarships.

\subsection{Other notation and conventions}
Throughout (unless explicitly stated otherwise) all abelian varieties are defined over a fixed algebraically closed field $\mathbb{K}$ of arbitrary characteristic.   

If $E$ is a vector bundle on a $g$-dimensional abelian variety $X$, then we let $\cc_{\ell}(E)$ denote its $\ell$th Chern class in the group $\Chow^{\ell}(X) = \Chow_{g - \ell}(X)$ of codimension $\ell$-cycles modulo rational equivalence.  We let $\operatorname{ch}_g(E)$ denote the component of $\operatorname{ch}(E)$ contained in $\Chow^g(X)_\QQ$.   In this notation, the Hirzebruch-Riemann-Roch theorem reads 
$$\chi(E) =  \int_X \operatorname{ch}_g(E) $$ 
where if $\eta = \sum_p n_p [p] \in \Chow^g(X)_\QQ$, then $\int_X \eta$ denotes the number $\sum_p n_p$.   
We refer to \cite{Fulton} for more details regarding intersection theory.  

Let $\E := \{\E^{p,q}_r, d^{p,q}_r \}_{p,q \in \ZZ, r \in \NN}$ be a spectral sequence arising from a filtered complex of $\mathbb{K}$-vector spaces.  Let  $\{M^n\}_{n \in \ZZ}$ be a collection of $\mathbb{K}$-vector spaces with the property that each $M^n$ admits a  descending filtration 
$$ \dotsc  \supseteq F^{p-1} M^n \supseteq F^p M^n \supseteq F^{p+1} M^n \supseteq \dotsc \text{,}$$ with $F^q M^n = M^n$, for all sufficiently small $q$, and $F^q M^n = 0$, for all sufficiently large $q$.  We use the notation  
$$\E_r^{p,q} \Rightarrow M^{p + q}$$
to mean that, for all sufficiently large integers $s$, and all integers $p$ and $q$, we have specified   (split) short exact sequences
$$ 0 \rightarrow F^{p+1} M^{p + q} \rightarrow F^{p} M^{p + q} \rightarrow \E^{p,q}_s  \rightarrow 0 \text{.}$$

\section{Index conditions}\label{pair:index:condition:section}

Let $X$ be an abelian variety defined over an algebraically closed field.  If $x \in X$, then $T_x \colon X \rightarrow X$ denotes translation by $x$ in the group law.
We define a cohomological condition which we place on a pair of vector bundles on $X$.
To state this criterion we first make some auxiliary definitions.  

\subsection{Non-degenerate vector bundles and the index condition}\label{index:defn}
We make definitions which apply to vector bundles of all ranks and extend the basic cohomological properties of non-degenerate line bundles. We discuss examples of vector bundles which satisfy these conditions in \S \ref{eg:semi:homog:index} and \S \ref{real:multi:vect}.
 
\noindent
{\bf Definitions.}
Let $E$ be a vector bundle on $X$.
\begin{itemize}
\item{We say that $E$ is \emph{non-degenerate} if $\chi(E) \not = 0$.}  
\item{If $E$ admits exactly one nonzero cohomology group, then we say that $E$ satisfies the \emph{index condition}.}
\item{If $E$ satisfies the index condition, then we let $\ii(E)$ denote the unique integer $b$ for which $\H^b(X,E)$ is nonzero and say that $\ii(E)$ is the \emph{index} of $E$.}
\end{itemize}

\subsection{Simple semi-homogeneous vector bundles and the index condition}\label{eg:semi:homog:index}  
Following Mukai, we say that a vector bundle $E$ on $X$ is \emph{semi-homogeneous} if, for all $x \in X$, there exists a line bundle $L$ such that $T^*_x E \cong E \otimes L$, see \cite[p. 239]{Muk78}.  In particular if $E$ has rank one, then $E$ is semi-homogeneous.  

\begin{proposition}\label{semi:homog:index}  
Let $E$ be a non-degenerate simple semi-homogeneous vector bundle on $X$.  Then  $E$ satisfies the index condition. 
In addition, if $A$ is an ample line bundle on $X$, then the roots of the polynomial $\chi(A^N \otimes E)$ are real and $\ii(E)$ equals the number of positive roots counted with multiplicity.
\end{proposition}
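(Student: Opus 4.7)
\textbf{Proof plan for Proposition \ref{semi:homog:index}.} The strategy is to reduce to the known case of a non-degenerate line bundle, which is governed by Mumford's index theorem. The reduction is made possible by Mukai's structure theorem for simple semi-homogeneous vector bundles from \cite{Muk78}: if $E$ is a simple semi-homogeneous vector bundle on $X$, then there exists an abelian variety $Y$, an isogeny $\pi \colon Y \rightarrow X$, and a line bundle $L$ on $Y$ such that $\pi_* L \cong E$. (Equivalently, there is an isogeny $\pi'$ along which $E$ pulls back to a direct sum of copies of a line bundle; either form suffices.) I will use the push-forward version since it makes the cohomology comparison most direct.

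Having chosen such an isogeny $\pi$ and line bundle $L$, I would first observe that because $\pi$ is finite, $\R^i \pi_* L = 0$ for $i > 0$, so the Leray spectral sequence degenerates to give a canonical identification
$$\H^i(X, E) = \H^i(X, \pi_* L) \cong \H^i(Y, L) \quad \text{for every $i$.}$$
In particular, $\chi(E) = \chi(L)$. Since $E$ is assumed non-degenerate, so is $L$, and hence by the result of Mumford cited in the introduction (\cite[\S 16]{Mum}, \cite[Theorem 2]{MumI}), $L$ satisfies the index condition, with a unique nonzero cohomology group in some degree $\ii(L)$. The displayed identification then transfers this property to $E$, proving that $E$ satisfies the index condition with $\ii(E) = \ii(L)$.

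For the second assertion, let $A$ be an ample line bundle on $X$. Since $\pi$ is a finite surjective morphism between projective varieties, the pullback $\pi^* A$ is an ample line bundle on $Y$. The projection formula combined with the vanishing of the higher direct images gives
$$\chi(X, A^N \otimes E) \;=\; \chi(X, A^N \otimes \pi_* L) \;=\; \chi(Y, (\pi^* A)^N \otimes L),$$
so the two polynomials in $N$ coincide. Applying Mumford's index theorem to the non-degenerate line bundle $L$ and the ample line bundle $\pi^* A$ on $Y$, the roots of $\chi(Y, (\pi^* A)^N \otimes L)$ are real, and $\ii(L)$ equals the number of positive roots counted with multiplicity. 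Combined with $\ii(E) = \ii(L)$, this yields the desired statement for $E$ and $A$.

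The only real obstacle is to invoke Mukai's structure theorem with the correct push-forward form; once that is available, every step is a routine consequence of the projection formula, finiteness of $\pi$, and Mumford's index theorem. If instead one takes as input the pullback version $\pi^* E \cong L^{\oplus r}$, the same plan works, with the cohomology comparison instead passing through $\pi_* \Osh_Y$ and its decomposition under the action of the kernel of $\pi$; but the push-forward formulation avoids any delicate separability issues in positive characteristic.
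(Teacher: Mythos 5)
Your proposal is correct and follows essentially the same route as the paper: both invoke Mukai's theorem to write $E \cong f_*(L)$ for an isogeny $f\colon Y \to X$ and a line bundle $L$ on $Y$, use the projection formula and the (degenerate) Leray spectral sequence to identify $\H^i(X, A^{\otimes N}\otimes E)$ with $\H^i(Y, f^*(A)^{\otimes N}\otimes L)$, and then apply Mumford's index theorem to $L$ and the ample line bundle $f^*A$ on $Y$. The only differences are presentational — you spell out the vanishing of higher direct images for the finite map and mention the alternative pullback formulation, neither of which changes the argument.
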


\begin{proof}
By \cite[Theorem 5.8, p. 260]{Muk78}, $E \cong f_*(L)$ for some isogeny $f \colon Y \rightarrow X$, and some line bundle $L$ on $Y$.  Let $A$ be an ample line bundle on $X$ and let $N$ be an integer.  Using the push-pull formula and the Leray spectral sequence we check that
\begin{equation}\label{semi:homog:index:1}  \H^i(X, A^{\otimes N} \otimes E) \cong \H^i(Y, f^*(A)^{\otimes N} \otimes L) \text{, for all $i$.} \end{equation}
Using the isomorphisms \eqref{semi:homog:index:1}, we conclude that
\begin{equation}\label{semi:homog:index:2} \chi(A^{\otimes N} \otimes E) = \chi(f^*(A)^{\otimes N} \otimes L)  \text{.}\end{equation}

Setting $N = 0$, and using \eqref{semi:homog:index:2} and \eqref{semi:homog:index:1}, we conclude that $L$ is non-degenerate and that $\H^j(X, E) = 0$ for all $j \not = \ii(L)$.  Hence, $E$ satisfies the index condition and $\ii(E) = \ii(L)$.  

On the other hand, if $N$ is positive then $f^*(A)^{\otimes N}$ is an ample line bundle on $Y$.  Using Mumford's index theorem \cite[p. 145]{Mum} applied to $L$, together with equation \eqref{semi:homog:index:2}, we conclude that the roots of the polynomial $\chi(A^{\otimes N} \otimes E)$ are real and that $\ii(E)$ equals the number of positive roots counted with multiplicity.  
\end{proof}

\noindent
{\bf Remark.}
If $E$ is a non-degenerate simple semi-homogeneous vector bundle on $X$ and if
$\chi(E)$ is not divisible by the characteristic of the ground field, then the cohomology group $\H^{\ii(E)}(X, E)$ is the unique irreducible weight $1$ representation of the (non-degenerate) theta group $\G(E)$.  This fact, which extends work of Mumford \cite[\S 1]{MumI}, is proven in \cite[\S 6]{Grieve:PhD:Thesis}.  The higher weight representation theory of the group $\G(E)$ is known by work of E.Z. Goren \cite[Theorem A1.4]{Goren:theta:preprint} and, independently, by \cite[\S 6]{Grieve:PhD:Thesis}.

\subsection{The pair index condition}\label{pair:index}
We now define the \emph{pair index condition}.  This condition allows us to study families of cup-product maps and, in \S \ref{families}, to relate these maps to the fiberwise evaluation map of suitably defined vector bundles.   

\noindent
{\bf Definition.}
A pair $(E,F)$ of vector bundles on $X$ satisfies the \emph{pair index condition} if, for all $x \in X$,
the vector bundles 
$\text{$T^*_xE$, $F$, and $T^*_x(E)\otimes F$}$ satisfy the index condition and
$\ii(T^*_x(E) \otimes F) = \ii(T^*_xE) + \ii(F) \text{.}$

Each pair $(E,F)$ of vector bundles on $X$ which satisfies the pair index condition determines cup-product maps
\begin{equation}\label{basic-cup-prod}
 \cup(T^*_x E, F) : \H^{\ii(T_x^*E)}(X,T^*_x E) \otimes \H^{\ii(F)}(X,F) \xrightarrow{\cup} \H^{\ii(T^*_x(E) \otimes F)}(X, T^*_x(E) \otimes F) \text{,}  
\end{equation}
for all $x \in X$, with nonzero source and target space.  

\noindent
{\bf Remark.}
Using upper-semicontinuity, see \cite[part (a) of the corollary on p. 47]{Mum} for instance, we deduce
\begin{itemize}
\item{a pair $(E,F)$ of vector bundles on $X$ satisfies the pair index condition if and only if the vector bundles 
$\text{$T^*_xE$, $F$, and $T^*_x(E) \otimes F$}$ satisfy the index condition, for all $x \in X$, and $\ii(E) + \ii(F) = \ii(E \otimes F) \text{;}$}
\item{a pair $(L,M)$ of line bundles on $X$ satisfies the pair index condition if and only if $L$, $M$, and $L \otimes M$ are non-degenerate and $\ii(L) + \ii(M) = \ii(L \otimes M)$.}
\end{itemize}

\subsection{Real multiplication and the pair index condition}\label{real:multi:vect}
In \cite{Shimura} Shimura constructs families of abelian varieties with real multiplication.  On the other hand, in \cite{Matsushima76}, Matsushima showed how the Appell-Humbert theorem can be generalized to construct simple semi-homogeneous vector bundles of higher rank.  Here, we use these results to address Problem \ref{cp-prod-intro-prob} \emph{(b)}.  To achieve this, we first consider Problem \ref{complex:pic} and then use the above mentioned work of Shimura and Matsushima to relate Problems \ref{complex:pic} and \ref{cp-prod-intro-prob} \emph{(b)}.  As it turns out, Theorem \ref{existence:prop} is a special case of Theorem \ref{Pair:index:theorem}.  We include the proof of Theorem \ref{existence:prop}, even though it is similar to that of Theorem \ref{Pair:index:theorem}, because it is elementary and solidifies the main ideas used in the proof of the more general result.

\subsubsection{Real multiplication and line bundles}\label{PIC:number:fields}  
Let $K$ denote a totally real number field of degree $g$ over $\QQ$,  let $\sigma_1,\dots, \sigma_g$ denote the embeddings of $K$ into $\RR$, and let $\Osh_K$ denote the ring of integers of $K$.  

A complex torus $X$ \emph{admits multiplication by $K$} if there exists an embedding 
$$K \hookrightarrow \operatorname{End}(X)\otimes_\ZZ \QQ \text{.}$$
Let $\mathfrak{h}$ denote the upper half plane, 
let $\z=(z_1,\dots, z_g)\in \mathfrak{h}^g$, and let 
$$\Lambda_\z := \{(z_1 \sigma_1(\alpha) + \sigma_1(\beta),\dots, z_g \sigma_g(\alpha)+\sigma_g(\beta)) : \alpha, \beta \in \Osh_K \} \subseteq \CC^g \text{.}$$  Then $\Lambda_\z$ is the $\ZZ$-linear span of an $\RR$-basis for $\CC^g$ and the quotient $X_\z := \CC^g / \Lambda_\z$ is a complex torus which has multiplication by $K$, \cite[\S 9.2]{BL}. 

\begin{theorem}\label{existence:prop}
Let $p$ and $q$ be nonnegative integers whose sum is less than or equal to $g$.  Let $\z=(z_1,\dots, z_g)\in \mathfrak{h}^g$.  The complex torus $X_\z$ admits line bundles $L$ and $M$ such that $L$, $M$, and $L\otimes M$ are non-degenerate, $\ii(L) = p$, $\ii(M) = q$, and $\ii(L \otimes M) = p + q$.
\end{theorem}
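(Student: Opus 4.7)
The plan is to translate the theorem into an arithmetic question about $K$ via the Appell--Humbert correspondence, and then to solve that question by an elementary lattice-meets-cone argument in $\RR^{2g}$. Recall that a line bundle $L$ on $X_\z$ corresponds to a pair $(H,\chi)$, where $H$ is a Hermitian form on $\CC^g$ whose imaginary part is integer-valued on $\Lambda_\z$; the line bundle $L$ is non-degenerate iff $H$ is non-degenerate, and in that case $\ii(L)$ equals the number of negative eigenvalues of $H$ (this follows from Mumford's index theorem applied to $H$ and any positive-definite form). It therefore suffices to produce two such Hermitian forms $H_1,H_2$ for which $H_1$, $H_2$, and $H_1+H_2$ are non-degenerate with exactly $p$, $q$, and $p+q$ negative eigenvalues respectively.

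For each $\alpha\in K$ introduce the diagonal Hermitian form
$$H_\alpha(\mathbf u,\mathbf v)=\sum_{i=1}^g\frac{\sigma_i(\alpha)}{\Im(z_i)}\,u_i\bar v_i.$$
Writing lattice vectors in the form $u_i=z_i\sigma_i(\gamma)+\sigma_i(\delta)$ and $v_i=z_i\sigma_i(\gamma')+\sigma_i(\delta')$ with $\gamma,\delta,\gamma',\delta'\in\Osh_K$, a direct computation (using that $\Im(z_i\bar w)=\Im(z_i)\operatorname{Re}(w)+\cdots$) gives
$$\Im H_\alpha(\mathbf u,\mathbf v)=\Tr_{K/\QQ}\!\bigl(\alpha(\gamma\delta'-\delta\gamma')\bigr).$$
Hence $H_\alpha$ is the Hermitian form of some line bundle on $X_\z$ precisely when $\alpha$ lies in the inverse different $\mathfrak d_K^{-1}$. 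Moreover, since $\Im(z_i)>0$, the eigenvalues $\sigma_i(\alpha)/\Im(z_i)$ of $H_\alpha$ have the same signs as the $\sigma_i(\alpha)$. The theorem is therefore reduced to finding $\alpha_1,\alpha_2\in\mathfrak d_K^{-1}$ with precisely $p$, $q$, and $p+q$ negative embeddings for $\alpha_1$, $\alpha_2$, and $\alpha_1+\alpha_2$ respectively.

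After relabelling the $\sigma_i$, arrange that the negative embeddings of $\alpha_1$ are to lie in $\{1,\dots,p\}$ and those of $\alpha_2$ in $\{p+1,\dots,p+q\}$; this is possible because $p+q\leq g$. Let $\Omega\subseteq\RR^g\oplus\RR^g$ be the set of pairs $(x,y)$ satisfying $x_i<0<y_i$ and $x_i+y_i<0$ for $i\leq p$; $y_i<0<x_i$ and $x_i+y_i<0$ for $p<i\leq p+q$; and $x_i,y_i>0$ for $i>p+q$. Then $\Omega$ is an open cone, is manifestly non-empty, and every $(x,y)\in\Omega$ has by construction the required sign patterns for $x$, $y$, and $x+y$. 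Since $\mathfrak d_K^{-1}$ is a full-rank lattice in $\RR^g$ via $(\sigma_1,\dots,\sigma_g)$, the product $\mathfrak d_K^{-1}\oplus\mathfrak d_K^{-1}$ is a full-rank lattice in $\RR^{2g}$, and so meets the non-empty open cone $\Omega$ (any inscribed ball in $\Omega$ can be dilated by the cone action until it exceeds the covering radius of the lattice). Any intersection point supplies the required pair $(\alpha_1,\alpha_2)$, completing the proof.

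The chief obstacle is that one cannot specify the signs of $\alpha_1$, $\alpha_2$, and $\alpha_1+\alpha_2$ independently: at embeddings where $\sigma_i(\alpha_1)$ and $\sigma_i(\alpha_2)$ have opposite signs, the sign of the sum depends on their relative magnitudes. Packaging both the sign and the magnitude requirements into a single scaling-invariant open condition (the cone $\Omega$) dissolves this difficulty, reducing the existence statement to the trivial fact that a full-rank lattice intersects every non-empty open cone.
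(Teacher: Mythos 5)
Your proposal is correct and follows essentially the same route as the paper: the same diagonal Hermitian forms $H_\alpha$ attached to totally real elements, the same reduction to finding field elements whose conjugates realize a prescribed sign pattern, and the same open-cone formulation of the simultaneous sign conditions on $\alpha_1$, $\alpha_2$, and $\alpha_1+\alpha_2$. The only (harmless) variations are that you locate the pair in the full-rank lattice $\mathfrak d_K^{-1}\oplus\mathfrak d_K^{-1}$ via a dilated-ball argument, whereas the paper uses density of the image of $K^{\oplus 2}$ in $\RR^{2g}$ followed by an integer scaling into $\Osh_K^{\oplus 2}$.
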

\begin{proof}
First, note that every $\eta \in \Osh_K$ determines a Hermitian form $$H_{\eta}:\CC^g\times \CC^g\rightarrow \CC \text{ defined by } 
 H_{\eta}(\x,\y):= \sum\limits_{i=1}^{g}\frac{\sigma_i(\eta)}{\operatorname{Im} z_i} x_i \overline{y_i} \text{}$$  for $\x=(x_1,\dots, x_g)$ and $\y=(y_1,\dots, y_g) \in \CC^g$.  
 
Also if $\x\text{, } \y \in \Lambda_{\z}$ and  
$$\text{$\x = (z_1 \sigma_1(\alpha)+\sigma_1(\beta),\dots, z_g \sigma_g(\alpha)+\sigma_g(\beta))$,}$$
$$\text{$\y = (z_1 \sigma_1(\gamma)+\sigma_1(\zeta),\dots, z_g \sigma_g(\gamma)+\sigma_g(\zeta))$,}$$ with  $\alpha$, $\beta$, $\gamma$, and $\zeta \in \Osh_K$,  then 
$$\operatorname{Im} H_{\eta}(\x,\y) = \operatorname{Tr}_{K/\QQ}(\eta \alpha \zeta) - \operatorname{Tr}_{K/\QQ}(\eta \beta \gamma) \text{.}$$  
Thus, the imaginary part of $H_\eta$ is integral on $\Lambda_\z$.

Fix disjoint subsets $I,J \subseteq \{1,\dots, g\}$ of cardinalities $p$ and $q$ respectively.
Let $U \subseteq \RR^{2g}$ be the subset consisting of those points $(x_1,\dots, x_g,y_1,\dots, y_g) \in \RR^{2g}$ such that $x_k < 0$ if $k \in I$, $x_k > 0$ if $k \not \in I$, $y_k < 0$ if $k \in J$, $y_k > 0$ if $k \not \in J$, and $x_k + y_k < 0$ for all $k \in I \cup J$.  Then $U$ is open in $\RR^{2g}$.  On the other hand, the set 
$$S := \{(\sigma_1(\eta),\dots, \sigma_g(\eta), \sigma_1(\beta), \dots, \sigma_g(\beta)) : (\eta, \beta) \in K^{\oplus 2} \}$$ 
is dense in $\RR^{2g}$. (This follows for example from \cite[p. 135]{Mar}.)  We conclude that  the intersection of $S$ with $U$ is nonempty. 

Considering the definitions of $U$ and $S$, we conclude that (after scaling by a positive integer if necessary) there exist $\eta, \beta \in \Osh_K$ 
with the property that
\begin{enumerate}
\item{$\sigma_k(\eta) < 0$ if $k\in I$ and $\sigma_k(\eta) > 0$ if $k \not \in I$;}
\item{$\sigma_k(\beta) < 0$ if $k\in J$ and $\sigma_k(\beta) > 0$ if $k \not \in J$;}
\item{ $\sigma_k(\eta) + \sigma_k(\beta) < 0$ if $k \in I \cup J$.}
\end{enumerate}
Consequently, the Hermitian forms $H_\eta$, $H_\beta$, and $H_\eta + H_\beta = H_{\eta + \beta}$ have, respectively, exactly $p$, $q$, and $p+q$ negative eigenvalues.  

Let $\chi_{H_\eta}$ and $\chi_{H_\beta}$ be semi-characters for $\Lambda_{\z}$ with respect to $H_\eta$ and $H_\beta$ respectively.  The line bundles $L(H_\eta, \chi_{H_\eta})$, $L(H_\beta, \chi_{H_\beta})$, $L(H_\eta +H_\beta, \chi_{H_\eta} \chi_{H_\beta})$ are non-degenerate, see \cite[p. 80]{Mum}, and satisfy the relation
$$\ii(L(H_\eta, \chi_{H_\eta})) + \ii(L(H_\beta, \chi_{H_\beta})) = \ii(L(H_\eta +H_\beta, \chi_{H_\eta} \chi_{H_\beta}))$$   by the corollary on p. 151 of \cite{Mum}.
\end{proof}

\noindent
{\bf Remark.}
Shimura proved that there exist $\z \in \mathfrak{h}^g$ for which $\operatorname{End}_\QQ(X_{\z}) = K$, see \cite[Theorem 5, p. 176]{Shimura} or \cite[Theorem 9.9.1, p. 274]{BL}.  Using this result, together with the fact that $K$ contains no nontrivial idempotents, and \cite[Theorem 5.3.2 p. 123]{BL} we conclude that there exists simple abelian varieties for which Theorem \ref{existence:prop} applies. 

\subsubsection{Real multiplication and semi-homogeneous vector bundles}
Let $K$ be a totally real number field of degree $e$ over $\QQ$ and let $\sigma_1, \dots, \sigma_e$ denote the embeddings of $K$.  Let $g$ be a positive integer which is divisible by $e$, and let $m := \frac{g}{e}$.  Let $\mathfrak{h}^{\oplus e}_m$ denote the $e$-fold product of the Siegel upper half space $\mathfrak{h}_m$.  Finally, let $\rho \colon K \rightarrow M_g(\CC)$ be the representation of $\QQ$-algebras defined by 
$$ \eta \mapsto \operatorname{diag}(\sigma_1(\eta),\dots, \sigma_e(\eta)) \otimes \mathbf{1}_m \text{,} $$ 
where the tensor product denotes the Kronecker product of matrices.

Before stating a more general version of Theorem \ref{existence:prop}  we recall that if $X = \CC^g / \Lambda$ is a complex torus then $\operatorname{NS}(X)_\QQ$ can be identified with the collection of Hermitian forms $H \colon \CC^g \times \CC^g \rightarrow \CC$ whose imaginary part is rational valued on $\Lambda$.  Using this identification if $E$ is a vector bundle on $X$, then $\det E \cong L(H,\chi)$ where $L(H,\chi)$ is a line bundle on $X$ determined by Appell-Humbert data.  We then have that $\operatorname{slope}(E) := \frac{[\det E ]}{\operatorname{rank} E} \in \operatorname{NS}(X)_\QQ$ is identified with $\frac{1}{\operatorname{rank} E} H$.
(Here $[\det E]$ denotes the class of $\det E$ in $\operatorname{NS}(X) := \Pic(X) / \Pic^0(X)$.)

We prove

\begin{theorem}\label{Pair:index:theorem} 
Let $(X, H, \iota)$ be a polarized abelian variety, of dimension $g$, with endomorphism structure $(K, \operatorname{id}_K, \rho)$.  Then $X$ admits, for all $\ell \geq 0$, $ k \geq 0$, $\ell + k \leq e$, classes $a, b \in \operatorname{NS}(X)_\QQ$, and non-degenerate simple semi-homogeneous vector bundles $E$ and $F$ with the properties that $\operatorname{slope}(E) = a$, $\operatorname{slope}(F) = b$, $\ii(E) = m \ell$, $\ii(F) = m k$, $E \otimes F$ is non-degenerate and $\ii(E \otimes F) = m(\ell + k)$.
\end{theorem}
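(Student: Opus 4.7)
My approach would be to mimic the proof of Theorem \ref{existence:prop}, with Matsushima's generalization of the Appell--Humbert theorem \cite{Matsushima76} playing the role that the classical Appell--Humbert played there. The polarization $H$ together with the $K$-action via $\rho$ gives a family of Hermitian forms on $\CC^g$ indexed by $K$: for each $\eta \in K$, I would define $H_\eta(\x, \y) := H(\rho(\eta) \x, \y)$. Because $K$ is totally real and $\rho$ is compatible with $H$ through the Rosati involution (which is trivial on $K$), every $\rho(\eta)$ is self-adjoint with respect to $H$, so $H$ and $\rho(\eta)$ can be simultaneously diagonalized; in such a basis $\rho(\eta) = \diag(\sigma_1(\eta), \dots, \sigma_e(\eta)) \otimes \mathbf{1}_m$, and hence $H_\eta$ has exactly $m \cdot |\{i : \sigma_i(\eta) < 0\}|$ negative eigenvalues. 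Moreover, for $\eta \in \Osh_K$ the imaginary part of $H_\eta$ is integer valued on $\Lambda$.

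The combinatorial step would then repeat the argument from the proof of Theorem \ref{existence:prop}. Noting that the constraint $\ii(E \otimes F) = m(\ell + k) \leq g$ forces $\ell + k \leq e$, I would fix disjoint subsets $I, J \subseteq \{1,\dots,e\}$ with $|I| = \ell$ and $|J| = k$. Since the image of $K \oplus K \to \RR^{2e}$ under the product of the embeddings $\sigma_1,\dots,\sigma_e$ is dense, after clearing denominators I may select $\eta, \beta \in \Osh_K$ with $\sigma_i(\eta) < 0$ exactly for $i \in I$, $\sigma_i(\beta) < 0$ exactly for $i \in J$, and $\sigma_i(\eta) + \sigma_i(\beta) < 0$ exactly for $i \in I \cup J$. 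Then $H_\eta$, $H_\beta$, and $H_{\eta + \beta} = H_\eta + H_\beta$ are non-degenerate Hermitian forms with exactly $m\ell$, $mk$, and $m(\ell + k)$ negative eigenvalues respectively, whose imaginary parts are integral on $\Lambda$, and so they define classes in $\operatorname{NS}(X)_\QQ$.

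To promote these forms to vector bundles I would invoke Matsushima's theorem \cite{Matsushima76}, which realizes such Hermitian data (together with auxiliary semi-character information) as slopes of non-degenerate simple semi-homogeneous vector bundles on $X$. This produces $E$ and $F$ with $\operatorname{slope}(E) = a$ and $\operatorname{slope}(F) = b$ proportional in $\operatorname{NS}(X)_\QQ$ to the classes of $H_\eta$ and $H_\beta$; Proposition \ref{semi:homog:index} then delivers $\ii(E) = m\ell$ and $\ii(F) = mk$, since for a simple semi-homogeneous bundle the index is controlled by the number of negative eigenvalues of any Hermitian form representing its slope. The main obstacle is that $E \otimes F$, while still semi-homogeneous, is typically no longer simple. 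I would resolve this by appealing to Mukai's structural result \cite{Muk78} that every semi-homogeneous vector bundle decomposes as a direct sum of simple semi-homogeneous bundles sharing a common slope, here $a + b$; applying Proposition \ref{semi:homog:index} to each summand (whose slope is represented, up to positive scaling, by $H_{\eta + \beta}$) gives index $m(\ell + k)$ for every summand, so $E \otimes F$ satisfies the index condition with $\ii(E \otimes F) = m(\ell + k)$. The remaining routine point is to verify that Matsushima's construction realizes the precise rational classes we want --- possibly only after multiplying $\eta$ and $\beta$ by a common positive integer to absorb denominators, which does not affect the signature counts.
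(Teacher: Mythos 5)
Your proposal is correct in substance and follows the same overall strategy as the paper: attach to each $\eta\in K$ a Hermitian form whose signature is $m$ times the archimedean sign data of $\eta$, use density of the image of $K^{\oplus 2}$ in $\RR^{\oplus 2e}$ to pick $\eta,\beta$ with the prescribed sign pattern (both you and the paper implicitly need $\ell+k\leq e$, which you rightly flag), and then invoke Matsushima to realize these forms as slopes of non-degenerate simple semi-homogeneous bundles with index $m\,\ii(\eta)$. You diverge from the paper in two places. First, you define $H_\eta(\x,\y)=H(\rho(\eta)\x,\y)$ intrinsically via self-adjointness of $\rho(\eta)$ under the (trivial) Rosati involution, whereas the paper passes through the explicit Shimura normal form $(\CC^g/J_Z(\mathcal{M}),H_Z,\rho)$ of \cite[Proposition 9.2.3]{BL} and forms the matrix product of $H_Z$ with $\rho(\eta)$; these amount to the same construction, and your version is arguably cleaner, though the paper's coordinates are what make the integrality of $\Im H_\eta$ on the lattice and the applicability of Matsushima's Schr\"odinger-representation construction immediate. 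Second, for $E\otimes F$ the paper quotes \cite[Theorem 9.2]{Matsushima76} directly, which exhibits $F_{Z,\eta}\otimes F_{Z,\gamma}$ as a direct sum of bundles analytically equivalent to $F_{Z,\eta+\gamma}$, whence the index condition with index $m(\ell+k)$; you instead route through Mukai's decomposition of semi-homogeneous bundles. That route works but needs two refinements: Mukai's summands are of the form (simple semi-homogeneous of slope $a+b$) $\otimes$ (homogeneous bundle), not merely simple, so you must also observe that tensoring a non-degenerate bundle satisfying the index condition with an iterated extension of elements of $\Pic^0$ preserves the index; and the assertion that the index of a simple semi-homogeneous bundle is read off from the signature of a Hermitian form representing its slope is exactly the content of Matsushima's Theorem 9.3 (or requires unwinding Proposition \ref{semi:homog:index} through Mukai's $E\cong f_*(L)$), so it should be cited rather than treated as a consequence of Proposition \ref{semi:homog:index} alone.
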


Before proving Theorem \ref{Pair:index:theorem} we remark that if   $\mathcal{M} \subseteq K^{\oplus 2m}$ is a free abelian group of rank $2 g$ and $\mathbf{z} = (z_1,\dots, z_e) \in \mathfrak{h}^{\oplus e}_m$, 
then the image of $\mathcal{M}$ in $\CC^g$ under the map
$J_{\mathbf{z}} \colon K^{\oplus 2 m} \rightarrow \CC^g $ defined by sending
$(\eta_1 , \dots, \eta_{2 m} ) $ to 
$$ \diag ((z_1 , \mathbf{1}_m), \dots, (z_e , \mathbf{1}_m)) \ ^t \left[
\begin{matrix} \sigma_1(\eta_1) &
			      \dotsc &
			      \sigma_1(\eta_{2 m}) &
			      \dotsc &
			      \sigma_e(\eta_1) &
			      \dotsc   &
			      \sigma_e(\eta_{2m}) \end{matrix} \right] $$ is a lattice \cite[\S 9.2]{BL}.

\begin{proof}[Proof of Theorem \ref{Pair:index:theorem}]   
A consequence of \cite[Proposition 9.2.3, p. 250]{BL}, is that $$(X, H, \iota) \cong (\CC^g / J_{\mathbf{z}}(\mathcal{M}) , H_{\mathbf{z}}, \rho)$$ for some $\mathbf{z} = (z_1 ,\dots, z_e )$, $z_i \in \mathfrak{h}_m$, some $\ZZ$-module $\mathcal{M} \subseteq K^{\oplus 2 m}$, of rank $2 g$ such that 
$$ \Tr_{K / \QQ} \left(\sum\limits_{ \ell = 1}^m a_\ell b_{m + \ell} \right) - \Tr_{K / \QQ}\left(\sum\limits_{\ell = 1}^m a_{m + \ell} b_\ell \right) \in \ZZ \text{,} 
$$
for all $\mathbf{a} = (a_1,\dots, a_{2 m})$, $\mathbf{b} = (b_1,\dots, b_{2 m}) \in \mathcal{M} \subseteq K^{\oplus 2m}$,
and some positive definite Hermitian from $H_{\mathbf{z}}$ on $\CC^g$, whose imaginary part is integral on $J_{\mathbf{z}}(\mathcal{M})$.  

Useful for our purposes is the fact that the Hermitian form $H_{\mathbf{z}}$ is defined by 
$$H_{\mathbf{z}}(\mathbf{x}, \mathbf{y}) :=  \ ^t \mathbf{x} \diag(\Im z_1 ,\dots, \Im z_e)^{ - 1} \overline{\mathbf{y}}$$ for all $\mathbf{x}$, $\mathbf{y}$ $\in \CC^g$.

On the other hand if $\eta \in K$, then multiplying the matrix representation of $H_{\mathbf{z}}$ with respect to the standard basis of $\CC^g$ with $\rho(\eta)$ we see that every nonzero $\eta \in K$ determines a non-degenerate Hermitian form $H_{\mathbf{z}, \eta}$ on $\CC^g$ whose imaginary part is rational valued on $J_{\mathbf{z}}(\mathcal{M})$.

Matsushima has shown that the Schr{\"o}dinger representation of the group 
$\G_{H_{\mathbf{z}, \eta}}(J_{\mathbf{z}}(\mathcal{M}))$ determines a non-degenerate simple semi-homogeneous vector bundle $F_{\mathbf{z}, \eta}$ on $X$ with slope $H_{\mathbf{z}, \eta}$ and index equal to $m \ii(\eta)$, where $\ii(\eta)$ denotes the number of negative conjugates of $\eta$, \cite[Corollary 8.3, p. 184, Theorem 8.4, p. 188, Theorem 9.3, p. 195]{Matsushima76}.

On the other hand since the image of $K^{\oplus 2}$ in $\RR^{\oplus 2 e}$, under the map 
$$(\alpha, \beta) \mapsto (\sigma_1(\alpha), \dots, \sigma_e(\alpha), \sigma_1(\beta), \dots, \sigma_e(\beta)) \text{,}$$ is dense, there exists $ \eta , \gamma \in K$ such that $\ii(\eta) = \ell$, $\ii(\gamma) = k$, and $\ii(\eta + \gamma) = \ell + k$.  

Using \cite[Theorem 9.2, p. 191]{Matsushima76}, we deduce that the vector bundle $F_{\mathbf{z}, \eta} \otimes F_{\mathbf{z}, \gamma}$ is a non-degenerate semi-homogeneous vector bundle (it need not be simple in general although it is a direct sum of vector bundles analytically equivalent to the vector bundle $F_{\mathbf{z}, \eta + \gamma}$), satisfies the index condition, and has index $m \ii(\eta + \gamma)$.
\end{proof}

\section{Statement of results and outline of their proof}\label{statement-cup-prod-results}

\subsection{{\large $q$}-ampleness and non-degenerate line bundles}\label{ample:results}
Important, in our study of cup-product maps, is the fact that non-degenerate line bundles on an abelian variety $X$, with nonzero index, are \emph{partially positive}.  

In \S \ref{section:vanishing:theorem}, we establish
 
\begin{theorem}\label{vanishing:thm}
Let $Y$ be an abelian variety defined over an algebraically closed field.  Let $L$ and $F$ denote, respectively, a line bundle and a coherent sheaf on $Y$.  If $\chi(L) \not = 0$, then there exists a positive integer $n_0$ such that, for all isogenies $f\colon X\rightarrow Y$, we have $\H^j(X,f^*(F\otimes L^n) \otimes \alpha) = 0$ for all $j > \ii(L)$, for all $n \geq n_0$, and all $\alpha \in \Pic^0(X)$.  
\end{theorem}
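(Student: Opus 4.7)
The plan is to reduce the desired vanishing on $X$ to a uniform cohomological vanishing for twisted line bundles, by means of a finite resolution of $F$ on $Y$. First, I would fix an ample line bundle $A$ on $Y$ and apply Serre's theorem iteratively to the successive syzygies: each $K_i \otimes A^{N_i}$ is globally generated for $N_i$ large, yielding a surjection $A^{-N_i \oplus r_i} \twoheadrightarrow K_i$. This produces a resolution
$$
0 \to K_g \to E_{g-1} \to \cdots \to E_0 \to F \to 0
$$
on $Y$, with $g := \dim Y$, in which each $E_i = \bigoplus_s A^{-k_i^{(s)}}$ is a finite direct sum of negative powers of $A$ and $K_g$ is the $g$-th syzygy (a coherent sheaf). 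Only finitely many exponents $k$ occur in $E_0, \ldots, E_{g-1}$. Since any isogeny $f \colon X \to Y$ is flat, pulling back yields an exact sequence of the same shape on $X$, with terms $f^*E_i = \bigoplus_s (f^*A)^{-k_i^{(s)}}$ and $f^*A$ ample on $X$.

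The heart of the argument will be the following uniform index-stability claim: for every integer $k \geq 0$ there exists $n_0(k)$ such that, for every isogeny $f \colon X \to Y$, every $n \geq n_0(k)$, and every $\alpha \in \Pic^0(X)$, the line bundle
$$
M \;:=\; (f^*A)^{-k} \otimes f^*L^n \otimes \alpha
$$
on $X$ is non-degenerate with $\ii(M) = \ii(L)$. The point is that $\alpha$ is numerically trivial, so twisting by $\alpha$ preserves Euler characteristics; combined with the push-pull formula, this gives
$$
\chi\bigl((f^*A)^N \otimes M\bigr) \;=\; \deg(f) \cdot \chi(A^{N-k} \otimes L^n) \;=\; \frac{\deg(f)}{g!} \int_Y \bigl((N-k)A + nL\bigr)^g,
$$
using Hirzebruch--Riemann--Roch and the triviality of the Todd class on $Y$. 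Viewing this as a polynomial in $N$, its roots are $N = k + t_i n$, where $t_1, \ldots, t_g$ are the roots of Mumford's polynomial $P(t) := \chi(A^t \otimes L)$ associated to $L$. By Mumford's index theorem the $t_i$ are real, none of them equals $0$ (since $P(0) = \chi(L) \neq 0$), and exactly $\ii(L)$ of them are positive. For $n > k/\min_i |t_i|$, the shifted polynomial thus has exactly $\ii(L)$ positive roots; a second application of Mumford's index theorem, this time on $X$ with the ample line bundle $f^*A$, will give $\ii(M) = \ii(L)$. Since a non-degenerate line bundle has cohomology concentrated in its index degree, this yields $\H^j(X, M) = 0$ for all $j > \ii(L)$, uniformly in $f$ and $\alpha$.

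To conclude, I would set $n_0 := \max_k n_0(k)$, with the maximum taken over the finite collection of exponents $k$ appearing in $E_0, \ldots, E_{g-1}$. For $j > \ii(L)$ and $n \geq n_0$, the uniform claim forces $\H^{j+i}(X, f^*E_i \otimes f^*L^n \otimes \alpha) = 0$ for each $i = 0, \ldots, g-1$, while $\H^{j+g}(X, f^*K_g \otimes f^*L^n \otimes \alpha)$ vanishes for dimension reasons, since $j + g > g = \dim X$. Splicing the short exact sequences $0 \to f^*K_{i+1} \to f^*E_i \to f^*K_i \to 0$ (with $f^*K_0 := f^*F$), twisting by $f^*L^n \otimes \alpha$, and iterating the resulting long exact sequences in cohomology will then force $\H^j(X, f^*(F \otimes L^n) \otimes \alpha) = 0$. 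The main obstacle is the uniform index-stability claim: it hinges on the scaling behavior of the roots of $P$, together with the numerical triviality of $\Pic^0(X)$, to ensure that neither the choice of isogeny $f$ nor the twist $\alpha$ perturbs the index of $M$.
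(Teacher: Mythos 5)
Your argument is correct, and its overall architecture matches the paper's: resolve $F$ by finite direct sums of negative powers of an ample line bundle $A$, establish that the twists $A^{-k}\otimes L^n$ have stable index $\ii(L)$ for $n$ large (uniformly over the finitely many $k$ that occur, over all isogenies $f$, and over all $\alpha\in\Pic^0(X)$), pull everything back along the flat map $f$, and chase cohomology through the resolution. The differences are in the two supporting ingredients. For the resolution, the paper uses Castelnuovo--Mumford regularity (its Proposition \ref{m:reg:prop}), which pins down the exponents as $A^{-p\ell-m}$ with $p=\reg_A(\Osh_Y)$ and $m=\reg_A(F)$; your iterated application of Serre's theorem produces the same kind of complex with less explicit bookkeeping, which is harmless since only finitely many exponents appear. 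For the index-stability lemma, the paper invokes its Corollary \ref{index:lemma}, whose proof goes through the continuity of the index function on the non-degenerate locus of $\N^1(Y)_\RR$ (Step B of Mumford's proof), and then transfers the statement from $Y$ to $X$ by citing Mumford's index theorem again for $f^*$ and the $\Pic^0$ twist. You instead prove the lemma directly: writing $\chi((f^*A)^N\otimes (f^*A)^{-k}\otimes f^*L^n\otimes\alpha)=\deg(f)\,n^g P((N-k)/n)$ with $P(t)=\chi(A^t\otimes L)$, you read off that the roots are $N=k+nt_i$, none of which changes sign once $n>k/\min_i|t_i|$ (using $P(0)=\chi(L)\neq 0$), and conclude by a second application of Mumford's index theorem on $X$. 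This root-scaling computation is more elementary and handles the isogeny and the numerically trivial twist in one stroke, at the cost of being special to this linear family; the paper's route via the real N\'eron--Severi space is what it needs anyway for its later results (Proposition \ref{constant:index} and Corollary \ref{index:cont:cor}). Your final cohomology chase via spliced short exact sequences is equivalent to the paper's spectral-sequence Lemma \ref{ss:lemma}, and your treatment of the tail term $K_g$ by dimension reasons is fine since $j>\ii(L)\geq 0$ forces $j+g>\dim X$.
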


We state additional immediate consequences of Theorem \ref{vanishing:thm} in 
\S \ref{section:vanishing:theorem}.

\subsection{Asymptotic nature of the pair index condition}
To prove our main result concerning cup-product maps, we first determine the asymptotic nature of the pair index condition. 

\begin{theorem}\label{Step2}
Let $X$ be an abelian variety and let $(L,M)$ be a pair of line bundles on $X$ which satisfies the pair index condition.  If $E$ and $F$ are vector bundles on $X$, then there exists a positive integer $n_0$ such that, for all $n \geq n_0$, the pair $(L^n \otimes E, M^n \otimes F)$ satisfies the pair index condition and 
$$ \ii(T^*_x(L^n \otimes E)) = \ii(L) \text{,} \ii(M^n \otimes F) = \ii(M) \text{, and} \ii(T^*_x(L^n \otimes E) \otimes M^n \otimes F) = \ii(L \otimes M)\text{,}$$ for all $x \in X$.
\end{theorem}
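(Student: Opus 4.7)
The pair index condition for $(L^n \otimes E, M^n \otimes F)$ with the stated indices amounts, by the remark following the definition of the pair index condition in \S\ref{pair:index}, to showing that for $n$ sufficiently large, each of $L^n \otimes E$, $M^n \otimes F$, and $T_x^*(L^n \otimes E) \otimes M^n \otimes F$ (the last uniformly in $x \in X$) satisfies the index condition, with indices $\ii(L)$, $\ii(M)$, and $\ii(L \otimes M)$ respectively; additivity $\ii(L) + \ii(M) = \ii(L \otimes M)$ is part of the hypothesis on $(L, M)$. The index conditions for $L^n \otimes E$ and $M^n \otimes F$ follow by direct application of Theorem \ref{vanishing:thm}: with $Y = X$, $f = \operatorname{id}_X$, line bundle $L$ and coherent sheaf $E$, it yields $n_1$ such that $\H^j(X, L^n \otimes E \otimes \alpha) = 0$ for $j > \ii(L)$, $n \geq n_1$, and all $\alpha \in \Pic^0(X)$. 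The complementary range $j < \ii(L)$ comes from Serre duality on $X$ (since $K_X = \Osh_X$) together with Theorem \ref{vanishing:thm} applied to $(L^{-1}, E^\vee)$, using $\ii(L^{-1}) = g - \ii(L)$ by Mumford's index theorem. Non-vanishing in degree $\ii(L)$ follows because $\chi(L^n \otimes E)$ is a polynomial in $n$ of degree $g$ with leading coefficient $\operatorname{rank}(E)(L^g)/g!$, which is nonzero since $(L^g) = g!\,\chi(L) \neq 0$. Translation invariance of cohomology then gives the same for $T_x^*(L^n \otimes E)$ uniformly in $x$, and the analogous argument treats $M^n \otimes F$.

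The main task is the uniform joint vanishing. Since $T_x^* L^n \cong L^n \otimes \alpha_{x,n}$ for some $\alpha_{x,n} \in \Pic^0(X)$ depending on $x$ and $n$,
$$T_x^*(L^n \otimes E) \otimes M^n \otimes F \cong (L \otimes M)^n \otimes (T_x^* E \otimes F) \otimes \alpha_{x,n},$$
so the requirement reduces to showing
$$\H^j(X, (L \otimes M)^n \otimes (T_x^* E \otimes F) \otimes \beta) = 0 \text{ for } j \neq \ii(L \otimes M),$$
uniformly in $x \in X$ and $\beta \in \Pic^0(X)$. The plan is to pass to the abelian variety $Y := X \times X$ and consider $\mathcal{G}_n := m^*(L^n \otimes E) \otimes p_2^*(M^n \otimes F)$; its restriction to $\{x\}\times X \cong X$ is exactly $T_x^*(L^n \otimes E) \otimes M^n \otimes F$. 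For an ample line bundle $A$ on $X$, set $\tilde L := p_1^* A \otimes p_2^*(L \otimes M)$ and $\tilde F := m^* E \otimes p_2^* F$ on $Y$; then $\chi(\tilde L) = \chi(A)\chi(L \otimes M) \neq 0$ and $\ii(\tilde L) = \ii(L \otimes M)$, so Theorem \ref{vanishing:thm} yields $\H^j(Y, \tilde L^n \otimes \tilde F \otimes \gamma) = 0$ for $j > \ii(L \otimes M)$, $n \gg 0$, and all $\gamma \in \Pic^0(Y)$. Writing $\gamma = p_1^*\gamma_1 \otimes p_2^*\gamma_2$ and using the projection formula along $p_1$, the ampleness of $A$ allows a Serre-vanishing-based collapse of the Leray spectral sequence for $p_1$, yielding the vanishing $\R^j p_{1*}(m^* E \otimes p_2^*((L \otimes M)^n \otimes F \otimes \gamma_2)) = 0$ for $j > \ii(L \otimes M)$, uniformly in $\gamma_2$. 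Cohomology and base change then delivers the required uniform fiberwise vanishing in degrees greater than $\ii(L \otimes M)$. The symmetric argument for $L^{-1}$, $M^{-1}$, $E^\vee$, $F^\vee$, combined with Serre duality on the fiber $X$, handles the range $j < \ii(L \otimes M)$. Non-vanishing in degree $\ii(L \otimes M)$ at every $x$ follows from the constancy of $\chi(\mathcal{G}_n|_{\{x\}\times X})$ in $x$ (translation invariance of Chern characters on abelian varieties) together with the polynomial-in-$n$ behavior of this $\chi$.

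The most delicate point is ensuring uniform Serre vanishing in $n$ during the Leray collapse: the relevant higher direct images depend on $n$, so the ampleness of $A^n$ in $\tilde L^n$ must dominate their regularities uniformly. This uniformity is available because the family $\{T_x^* E\}_{x \in X}$ has constant Hilbert polynomial by translation invariance, hence uniformly bounded regularity, which in turn bounds the regularities of the higher direct images uniformly in $n$ and $\gamma_2$.
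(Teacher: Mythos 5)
Your treatment of the two ``single'' bundles $L^n\otimes E$ and $M^n\otimes F$ is fine and is essentially the paper's Corollary \ref{vb:cor} (Theorem \ref{vanishing:thm} plus Serre duality for the range $j<\ii(L)$, Riemann--Roch for non-vanishing, and translation invariance). Your reduction of the joint term, via $T^*_x(L^n\otimes E)\otimes M^n\otimes F\cong (L\otimes M)^n\otimes(T^*_xE\otimes F)\otimes\alpha_{x,n}$, to a vanishing statement uniform in $x\in X$ and $\beta\in\Pic^0(X)$ is also correct; this is exactly the content of the paper's Proposition \ref{Step2:prop}.

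The gap is in your proof of that uniform statement. You pass to $X\times X$, apply Theorem \ref{vanishing:thm} there, and then try to collapse the Leray spectral sequence for $p_1$ using Serre vanishing for $A^n\otimes\gamma_1\otimes\R^b_{p_{1*}}(\mathcal{H}_n)$, where $\mathcal{H}_n$ depends on $n$. This requires the $A$-regularity of the sheaves $\R^b_{p_{1*}}(\mathcal{H}_n)$ to grow no faster than $n$, and your justification --- that the family $\{T^*_xE\}_{x\in X}$ has constant Hilbert polynomial and hence bounded regularity --- does not address this: that family is indexed by $x$, not by $n$, and says nothing about the $A$-regularity of $\R^b_{p_{1*}}(\mathcal{H}_n)$ as $n\to\infty$ (these sheaves have rank growing like $n^g$, so no fixed bounded family contains them). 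The same unproven uniformity is needed a second time when you pass from $\H^0(X,A^n\otimes\gamma_1\otimes\R^b_{p_{1*}}(\mathcal{H}_n))=0$ to $\R^b_{p_{1*}}(\mathcal{H}_n)=0$. Worse, controlling these higher direct images is essentially equivalent to the fiberwise vanishing you are trying to prove, so the argument is close to circular. The paper avoids this entirely: it resolves $E$ by direct sums of $A^{-p\ell-m}$ (Proposition \ref{m:reg:prop}), translates the resolution by $x$, and uses $T^*_xA\cong A\otimes\beta$ with $\beta\in\Pic^0(X)$ to rewrite each term as a fixed negative power of $A$ twisted by an element of $\Pic^0(X)$; the required vanishings are then supplied term by term by Corollary \ref{vb:cor}, whose uniformity over all of $\Pic^0(X)$ is exactly what absorbs the dependence on $x$, and Lemma \ref{ss:lemma} assembles them. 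You should replace your Leray/base-change step with an argument of this type (or prove a genuine uniform regularity bound for $\R^b_{p_{1*}}(\mathcal{H}_n)$, which you have not done).
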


\subsection{Statement of main result}
First, recall that  $p_i$ denotes the projection of $X \times X$ onto the first and second factors respectively (for $i = 1,2$), and $m := p_1 + p_2$.
We prove, in Proposition \ref{mainprop1}, that the image of the cup-product map \eqref{basic-cup-prod}, for $x \in X$, coincides with that of the fiberwise evaluation map
$$ \H^0(X, \R^{\ii(E \otimes F)}_{p_1*}(m^*E \otimes p_2^* F)) \otimes \kappa(x) \rightarrow \R^{\ii(E \otimes F)}_{p_1*}(m^*E \otimes p_2^* F){\mid_x}$$ of the vector bundle $\R^{\ii(E \otimes F)}_{p_1*}(m^*E \otimes p_2^* F)$ on $X$. 
Using this relationship, Theorem \ref{vanishing:thm}, Theorem \ref{Step2}, and results of Pareschi-Popa \cite{Par}, \cite{Par:Popa} and \cite{Par:Popa:II}, we give two proofs of our main result. (See \S \ref{proof:main:theorem} and \S \ref{main:theorem:proof2}.)

\begin{theorem}\label{MainTheorem}
Let $X$ be an abelian variety,  let $E$ and $F$ be vector bundles on $X$, and let $(L,M)$ be a pair of line bundles on $X$ which satisfies the pair index condition.  There exists a positive integer $n_0$ such that the cup-product maps
\begin{equation}\label{asym:cp1} \H^{\ii(L)}(X, T^*_x(L^n \otimes E)) \otimes \H^{\ii(M)}(X , M^n \otimes F) \xrightarrow{\cup} \H^{\ii(L\otimes M)}(X, T^*_x(L^n \otimes E)\otimes M^n \otimes F) 
\end{equation}
 are nonzero and surjective for all $n\geq n_0$ and all $x\in X$.  
Equivalently, the vector bundle 
\begin{equation}\label{vb-eqn}
\R^{\ii(L\otimes M)}_{p_1*}(m^*(L^n \otimes E)\otimes p_2^*(M^n \otimes F))
\end{equation} is globally generated if $n \geq n_0$.
\end{theorem}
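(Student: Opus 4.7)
The plan is to convert surjectivity of the cup-product maps into global generation of the pushforward
$$\mathcal{G}_n := \R^{\ii(L\otimes M)}_{p_1*}\bigl(m^*(L^n \otimes E)\otimes p_2^*(M^n \otimes F)\bigr)$$
via Proposition \ref{mainprop1}, and then to show that $\mathcal{G}_n$ is globally generated for $n$ large by verifying $\mathrm{IT}(0)$ and invoking the Pareschi-Popa machinery of \cite{Par:Popa}, \cite{Par:Popa:II}. First, I would invoke Theorem \ref{Step2} to fix $n_1$ such that for every $n \geq n_1$ the pair $(L^n \otimes E, M^n \otimes F)$ satisfies the pair index condition with indices $\ii(L), \ii(M), \ii(L \otimes M)$. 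This makes both sides of \eqref{asym:cp1} nonzero, and cohomology-and-base-change then realises $\mathcal{G}_n$ as a vector bundle whose fiber at $x$ is the target of \eqref{asym:cp1}. Proposition \ref{mainprop1} reduces the theorem to the claim that $\mathcal{G}_n$ is globally generated for $n$ large.

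To check $\mathrm{IT}(0)$, namely $\H^i(X, \mathcal{G}_n \otimes \alpha) = 0$ for all $i > 0$ and $\alpha \in \Pic^0(X)$, note that the pair index condition forces the Leray spectral sequence for $p_1$ to collapse, so that the projection formula gives
$$\H^i(X, \mathcal{G}_n \otimes \alpha) \cong \H^{i + \ii(L \otimes M)}\bigl(X \times X,\, m^*(L^n \otimes E) \otimes p_2^*(M^n \otimes F) \otimes p_1^* \alpha\bigr).$$
The automorphism $\phi(x,y) := (x - y, y)$ of $X \times X$ satisfies $m \circ \phi = p_1$ and $p_2 \circ \phi = p_2$, while the theorem of the square gives $(p_1 - p_2)^* \alpha \cong p_1^* \alpha \otimes p_2^* \alpha^{-1}$ for $\alpha \in \Pic^0(X)$. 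Pulling back along $\phi$ and applying Künneth decomposes the right-hand side as
$$\bigoplus_{a+b = i + \ii(L \otimes M)} \H^a(X, L^n \otimes E \otimes \alpha) \otimes \H^b(X, M^n \otimes F \otimes \alpha^{-1}).$$
For $i > 0$ every summand has either $a > \ii(L)$ or $b > \ii(M)$, and Theorem \ref{vanishing:thm} --- applied to $L$ and to $M$ with the trivial isogeny --- kills the offending factor for all $n$ large, uniformly in $\alpha$. Hence $\mathcal{G}_n$ is $\mathrm{IT}(0)$ for $n$ sufficiently large.

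The main obstacle is the passage from $\mathrm{IT}(0)$ to \emph{actual} global generation, since the Pareschi-Popa machinery natively delivers only \emph{continuous} global generation from M-regularity. The standard workaround --- absorbing one further continuously globally generated twist to promote continuous to honest global generation --- works cleanly here because the asymptotic freedom in $n$ permits one to peel off a factor of the form $L \otimes M$, whose partial positivity (again via Theorem \ref{vanishing:thm}) supplies the ample-like twist. A secondary subtlety is that every vanishing above must be uniform in $\alpha \in \Pic^0(X)$, which is precisely the content of Theorem \ref{vanishing:thm} that improves on Mumford's index theorem. The alternative proof promised in the introduction presumably avoids Fourier-Mukai altogether, arguing instead by direct semicontinuity on $X$ from the pair-index-condition vanishings.
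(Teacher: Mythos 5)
Your reduction to global generation of $\mathcal{G}_n$ via Proposition \ref{mainprop1}, and your verification that $\mathcal{G}_n$ satisfies IT(0) for $n\gg 0$ (Leray collapse, the automorphism $(x,y)\mapsto(x-y,y)$, K\"unneth, and Theorem \ref{vanishing:thm} applied to each factor uniformly in $\alpha$), are correct and close in spirit to the paper's second proof. The genuine gap is exactly where you flag it, and your proposed repair does not work. IT(0) of $\mathcal{G}_n$ only yields \emph{continuous} global generation (a principal polarization is IT(0) but has a base divisor); to get honest global generation the Pareschi--Popa machinery needs either the stronger vanishing $\H^i(X,\mathcal{G}_n\otimes A^{-1}\otimes\alpha)=0$ for an \emph{ample} $A$ and all $\alpha\in\Pic^0(X)$, or a tensor factor that is itself a continuously globally generated line bundle. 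The factor $L\otimes M$ you propose to ``peel off'' can serve neither purpose: in the cases of interest $\ii(L\otimes M)=\ii(L)+\ii(M)>0$, so $L\otimes M\otimes\alpha$ has no global sections for any $\alpha$ and is certainly not continuously globally generated; naive $q$-ampleness with $q>0$ supplies no such twist. Moreover, there is no projection-formula identity extracting $L\otimes M$ from $\mathcal{G}_n$, since $m^*L\otimes p_2^*M$ does not descend along $p_1$.

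The paper's second proof closes this gap by verifying the $A^{-1}$-twisted vanishing directly: by the projection formula and Leray, $\H^i(X,\mathcal{G}_n\otimes A^{-1}\otimes\alpha)\cong\H^{i+\ii(L\otimes M)}(X\times X,\, p_1^*(A^{-1}\otimes\alpha)\otimes m^*(L^n\otimes E)\otimes p_2^*(M^n\otimes F))$, and the required index condition for this twisted bundle comes from Corollary \ref{vb:cor} applied on $X\times X$ to the non-degenerate line bundle $m^*L\otimes p_2^*M$ of index $\ii(L\otimes M)$, with coefficient bundle $p_1^*A^{-1}\otimes m^*E\otimes p_2^*F$ (Proposition \ref{Par:Pop:glob:gen}). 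Note that your K\"unneth decomposition does not survive the insertion of $p_1^*A^{-1}$: pulling back along $\phi$ turns it into $(p_1-p_2)^*A^{-1}$, which is not a box product, so the vanishing must be established on $X\times X$ rather than factorwise. Finally, the paper's other proof (Proposition \ref{direct:cp:nonzero:surj}) is not the semicontinuity argument you guess at, but a direct proof of surjectivity via the vanishing of $\H^{q+1}(X\times X,\Ish_\Delta\otimes T^*_x(L^n\otimes E)\boxtimes(M^n\otimes F))$, obtained from an $m$-regularity resolution of the ideal sheaf of the diagonal together with Theorem \ref{vanishing:thm}.
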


Our first approach to proving Theorem \ref{MainTheorem} is logically independent of the main results of \cite{Par}, \cite{Par:Popa}, and \cite{Par:Popa:II}.  

In more detail, using Theorems \ref{vanishing:thm} and \ref{Step2}, we prove that the cup-product maps \eqref{asym:cp1} are nonzero and surjective, for all $x \in X$, and all sufficiently large $n$.  (See Proposition \ref{direct:cp:nonzero:surj}.)  Applying Proposition \ref{mainprop1}, which generalizes  \cite[Proposition 2.1]{Par}, we deduce that the vector bundles \eqref{vb-eqn} are globally generated whenever $n$ is sufficiently large.

Our second approach to proving Theorem \ref{MainTheorem} is to prove that the vector bundles   \eqref{vb-eqn} are globally generated whenever $n$ is sufficiently large.  We then deduce, using Proposition \ref{mainprop1}, that the cup-product maps \eqref{asym:cp1} are nonzero and surjective, for all points of $X$, and all sufficiently large $n$.   

In our second approach, to prove that the vector bundles \eqref{vb-eqn} are globally generated, we apply Pareschi-Popa's theory of $M$-regularity.  Specifically, we apply  \cite[Theorem 2.4, p. 289]{Par:Popa} to obtain a sufficient condition for such vector bundles to be globally generated.  (See Proposition \ref{Par:Pop:glob:gen}.)  In fact, we do not need the full strength of their theory \cite[Theorem 2.1, p. 654]{Par} suffices.  Our second approach then proves Theorem \ref{MainTheorem} by combining Proposition \ref{Par:Pop:glob:gen} and Theorem \ref{Step2}.

\section{Mumford's index theorem and the real Neron-Severi space}\label{index:section}
Let $X$ be an abelian variety of dimension $g$ and let $\N^1(X)$ denote the  group of line bundles on $X$ modulo numerical equivalence.  
We relate Mumford's index theorem to 
$$\N^1(X)_\RR := \N^1(X)\otimes_\ZZ \RR$$ 
the real Neron-Severi space of $X$.  

Mumford's proof of this theorem, see \cite[p. 145--152]{Mum}, was later extended by  Kempf and, independently, by C.P. Ramanujam \cite[Appendix]{Mum:Quad:Eqns}.  More recently,  B. Moonen and  G. van der Geer have given a very clear exposition of Mumford's proof \cite[p. 134--139]{MV}.

\subsection{Non-degenerate $\RR$-divisors}
If $L$ and $M$ are numerically equivalent line bundles, then they have equal Euler characteristic \cite[Theorem 1, p. 311]{Kle}.  We thus have a well defined function
\begin{equation}\label{euler:char:1}
\chi : \N^1(X) \rightarrow \ZZ  
\end{equation} 
defined by sending the numerical class of a line bundle to its Euler characteristic.  Using the Riemann-Roch theorem
$$ \chi(L) = \frac{1}{g!}\int_X \cc_1(L)^g = \frac{1}{g!} (L^g)\text{,}$$ where $L$ is a line bundle on $X$ and $(L^g)$ denotes its $g$-fold self-intersection number, we deduce that the function \eqref{euler:char:1} extends to a function
\begin{equation}\label{euler:char}
\chi : \N^1(X)_\RR \rightarrow \RR
\end{equation}
by extending scalars.  

Note that $\chi$ is a homogeneous polynomial function of degree $g$.  Indeed, let $\eta_1,\dots, \eta_m$ be numerical classes of line bundles $L_1,\dots, L_m$ which form a basis for $\N^1(X)$.  If 
$$\text{$\eta = \sum\limits_{i = 1}^m a_i \eta_i$, with $a_i \in \RR$,}$$ then $\chi(\eta)$ is the value of the polynomial
$$P(X_1,\dots, X_m) := \sum\limits_{\ell_1 + \dots + \ell_m = g,  \ell_i \geq 0} \frac{\int_X \cc_1(L_1)^{\ell_1}\dots \cc_1(L_m)^{\ell_m}}{\ell_1 ! \dots \ell_m !} X_1^{\ell_1} \dots X_m^{\ell_m}$$ evaluated at $(a_1,\dots, a_m)$.

We say that $\eta \in \N^1(X)_\RR$ is \emph{non-degenerate} if $\chi(\eta) \not = 0$.  Since $\chi$ is a homogeneous polynomial function its non-vanishing determines an open subset of $\N^1(X)_\RR$.

\subsection{The index of non-degenerate elements of $\N^1(X)_\QQ$}
If $\eta \in \N^1(X)$ is non-degenerate, then we define $\ii(\eta) := \ii(L)\text{,}$ where $L$ is any line bundle with numerical class equal to $\eta$.
Mumford's index theorem implies that numerically equivalent non-degenerate line bundles have the same index so this is well-defined. 

If $\eta$ is a non-degenerate element of $\N^1(X)_\QQ$, then so is $a\eta$ for all nonzero integers $a$.  In addition $a \eta \in \N^1(X)$, for some positive integer $a$, and we define
$$ \ii(\eta) := \ii(a \eta) \text{.}$$ 
Since $\ii(L) = \ii(L^n)$, for all non-degenerate line bundles $L$ and all positive integers $n$, see for instance \cite[Corollary p. 148]{Mum}, this is well-defined.

If $\eta \in \N^1(X)_\QQ$ is non-degenerate, then we refer to $\ii(\eta)$ as the \emph{index} of $\eta$.

\subsection{The index function and the non-degenerate locus of $\N^1(X)_\RR$}

\begin{proposition}\label{constant:index}
Let $U$ be the subset of $\N^1(X)_\RR$ defined by the non-vanishing of $\chi$.  If $\eta$ and $\xi$ are elements of $\N^1(X)_\QQ$ and lie in the same connected component of $U$, then they have the same index.
\end{proposition}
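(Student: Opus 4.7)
The plan is to reduce to a one-parameter family and invoke Mumford's index theorem together with a continuity argument on the roots of an Euler characteristic polynomial. Since $U$ is open in the finite-dimensional real vector space $\N^1(X)_\RR$, each connected component of $U$ is open and path-connected, so any two $\QQ$-rational points of the same component can be joined by a polygonal path with rational vertices, every edge of which lies entirely in $U$. It therefore suffices to prove the statement in the special case that $\eta, \xi \in \N^1(X)_\QQ$ are non-degenerate and the closed segment $[\eta, \xi]$ is contained in $U$.

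Fix an ample class $h \in \N^1(X)$ and, for $t \in [0,1]$, set $\eta_t := (1-t)\eta + t\xi$ and
$$Q_t(N) := \chi(Nh + \eta_t) \in \RR[N].$$
Extending Riemann--Roch to $\N^1(X)_\RR$, one sees that $Q_t$ has degree exactly $g$ in $N$, with leading coefficient $(h^g)/g!$ (a positive constant independent of $t$) and constant term $Q_t(0) = \chi(\eta_t) \neq 0$ for every $t \in [0,1]$. For each rational $t$, clearing denominators so as to replace $\eta_t$ by an integral multiple and applying Mumford's index theorem to the resulting polynomial shows that $Q_t$ has only real roots and that $\ii(\eta_t)$ equals the number of its positive roots counted with multiplicity.

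The main step is to upgrade this from rational to arbitrary $t \in [0,1]$. Since the coefficients of $Q_t$ depend polynomially on $t$ and the leading coefficient is a nonzero constant, the unordered multiset of complex roots of $Q_t$ varies continuously with $t$. The locus in the space of degree-$g$ real polynomials consisting of those with only real roots is closed: a non-real conjugate pair of roots of a limit polynomial would, by continuity of roots, persist as a non-real pair in all sufficiently nearby polynomials, contradicting the hypothesis. Combined with the density of $\QQ \cap [0,1]$ in $[0,1]$, this forces $Q_t$ to have only real roots for every $t \in [0,1]$. Because $Q_t(0) \neq 0$ throughout the interval, no root ever crosses the origin, so the number of positive real roots of $Q_t$ counted with multiplicity is a locally constant integer-valued function of $t$, and therefore constant on $[0,1]$. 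Specialising to $t = 0$ and $t = 1$ gives $\ii(\eta) = \ii(\xi)$.

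The main obstacle, in my view, is the continuity argument in the third paragraph---transferring the reality of the roots from the dense rational subset to all of $[0,1]$---which rests on the closedness of the real-rootedness locus among degree-$g$ real polynomials. Once this is in hand, the remainder of the proof is the standard observation that the number of positive roots of a polynomial is preserved along a continuous path as long as none of the roots crosses zero, which is guaranteed here by $\chi(\eta_t) \neq 0$ on $[0,1]$.
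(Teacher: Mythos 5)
Your argument is correct, and its first step --- approximating a path in a connected component of $U$ by a polygonal path with rational vertices whose edges lie in $U$, thereby reducing to the case of a single segment $[\eta,\xi]\subseteq U$ with rational endpoints --- is exactly the reduction the paper performs. Where you diverge is in how the segment case is settled: the paper scales to integral classes and then simply cites Step~B of Mumford's proof of the index theorem (\cite[p.~147]{Mum}), which is verbatim the statement that two integral classes joined by a non-degenerate segment have the same index. You instead reprove that step from the \emph{statement} of the index theorem, by tracking the polynomial $Q_t(N)=\chi(Nh+\eta_t)$: real-rootedness at rational $t$, closedness of the real-rooted locus among polynomials of fixed degree and fixed nonzero leading coefficient, density of $\QQ\cap[0,1]$, and the fact that $Q_t(0)=\chi(\eta_t)\neq 0$ prevents any root from crossing the origin, so the count of positive roots (which computes $\ii(\eta_t)$ at rational $t$) is constant. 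This is a legitimate and self-contained alternative: it trades a citation to an internal step of Mumford's argument for a black-box use of the index theorem's final statement plus an elementary continuity-of-roots argument, at the modest cost of a longer write-up. Both routes are sound; the only point worth being careful about in yours is that the continuity and closedness assertions require the degree and leading coefficient of $Q_t$ to be uniformly controlled, which you correctly secure by noting the leading coefficient is the constant $(h^g)/g!>0$.
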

\begin{proof}

Let $S$ be a connected component of $U$ and suppose that $\eta$ and $\xi$ are elements of $\N^1(X)_\QQ$ lying in $S$.  Since $S$ is an open subset of $\RR^m$, it is path connected so there exists a path from $\eta$ to $\xi$.  This path can be approximated by straight line segments, each of which is contained in $S$, of the form $t \mu + (1-t) \nu$, where $t \in [0,1]$, and $\mu$ and $\nu$ are elements of $\N^1(X)_\QQ$ lying in $S$.  

To prove Proposition \ref{constant:index}, it suffices to prove that if $\eta$ and $\xi$ are elements of $\N^1(X)_\QQ$, lying in $S$, and connected by a straight line, $t \eta + (1-t) \xi$, $t \in [0,1]$, contained in $S$  then $\ii(\eta) = \ii(\xi)$.  

By scaling the straight line, we reduce further to showing that if $\eta$ and $\xi$ are elements of $\N^1(X)$ and connected by a straight line, $t \eta + (1-t) \xi$, $t \in [0,1]$, lying in $S$ then $\eta$ and $\xi$ have the same index.  This is precisely Step B in Mumford's index theorem see \cite[p. 147]{Mum}. (Note the typo in the third line of Step B in the reprinted edition.  It should read $F (t, 1- t) \not = 0$ rather than $F (t, 1 -t) = 0$.  Compare with the original.)    
\end{proof}

\begin{corollary}
\label{index:cont:cor}
The index function $\ii : U \cap \N^1(X)_\QQ \rightarrow \{0,\dots, g \}$ extends to a continuous function $\ii : U \rightarrow \{0,\dots, g\}$.
\end{corollary}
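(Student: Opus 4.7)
The plan is to use Proposition \ref{constant:index} together with density of $\N^1(X)_\QQ$ in $\N^1(X)_\RR$ to define the extension on connected components of $U$, and then observe that a locally constant function on an open subset of a Euclidean space is automatically continuous.

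First I would note that $U$ is an open subset of the finite-dimensional $\RR$-vector space $\N^1(X)_\RR$, since $\chi$ is a polynomial function whose non-vanishing locus is open. Let $\{S_\lambda\}_{\lambda \in \Lambda}$ denote the connected components of $U$; each $S_\lambda$ is itself open in $\N^1(X)_\RR$. Because $\N^1(X)_\QQ$ is dense in $\N^1(X)_\RR$, each nonempty open set $S_\lambda$ meets $\N^1(X)_\QQ$, so $S_\lambda \cap \N^1(X)_\QQ$ is nonempty.

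Next, by Proposition \ref{constant:index}, the index function $\ii$ is constant on the set $S_\lambda \cap \N^1(X)_\QQ$, and I denote this common value by $i_\lambda \in \{0,\dots,g\}$. I then define $\tilde{\ii} \colon U \to \{0,\dots,g\}$ by setting $\tilde{\ii}(\eta) := i_\lambda$ for $\eta \in S_\lambda$. By construction $\tilde{\ii}$ agrees with $\ii$ on $U \cap \N^1(X)_\QQ$, so it is an extension of the original index function.

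Finally, I would check continuity: the function $\tilde{\ii}$ is constant on each connected component $S_\lambda$ of $U$, so it is locally constant on $U$. Since $\{0,\dots,g\}$ carries the discrete topology (as a finite subset of $\ZZ$) and local constancy is equivalent to continuity into a discrete target, $\tilde{\ii} \colon U \to \{0,\dots,g\}$ is continuous. There is no real obstacle here: the substantive content was already absorbed into Proposition \ref{constant:index}, and the present corollary is essentially a formal packaging of that statement together with the density of $\QQ$-points and the observation that connected components of an open set in Euclidean space are open.
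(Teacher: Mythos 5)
Your proof is correct and follows essentially the same route as the paper: both define the extension to be constant on each connected component of $U$, using Proposition \ref{constant:index} for well-definedness and the density of $\N^1(X)_\QQ$ to ensure each component contains a rational class, then conclude continuity from local constancy. No gaps.
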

\begin{proof}
If $\eta$ is an element of $U$, then it is contained in a connected component $S$ of $U$.  Since $\N^1(X)_\QQ \cap S \not = \emptyset$, 
we may define $\ii(\eta) := \ii(\xi)$ where $\xi$ is some element of $\N^1(X)_\QQ \cap S$.  Proposition \ref{constant:index} implies that this is well-defined.  This function is constant on the connected components of $U$ and hence is continuous.  
\end{proof}

\begin{corollary}\label{index:lemma}
Let $\eta$ be a non-degenerate integral class and let $\xi$ be an integral class.  There exists a positive integer $a_0$ such that $\chi(a \eta + \xi) \not = 0$ and $\ii(a \eta + \xi) = \ii(\eta)$ for all $a \geq a_0$.
\end{corollary}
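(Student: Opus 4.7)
The plan is to reduce the statement to the continuity of the index function on the non-degenerate locus, which is Corollary \ref{index:cont:cor}. The key observation is that the ray $a \mapsto a\eta + \xi$ in $\N^1(X)_\RR$, after rescaling by $1/a$, becomes the sequence $\mu_a := \eta + \frac{1}{a}\xi$, which converges to $\eta$ as $a \to \infty$. Since $\chi \colon \N^1(X)_\RR \to \RR$ is a polynomial function of degree $g$ (by Riemann-Roch and the extension of scalars), we have the identity $\chi(a\eta + \xi) = a^{g}\chi(\mu_a)$ by homogeneity.

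First, I would verify the non-vanishing assertion. Since $\chi$ is continuous and $\chi(\eta) \neq 0$, there is a neighbourhood of $\eta$ in $\N^1(X)_\RR$ on which $\chi$ is nonzero. For $a$ sufficiently large, $\mu_a$ lies in this neighbourhood, so $\chi(\mu_a) \neq 0$; combined with the homogeneity identity above, this yields $\chi(a\eta + \xi) \neq 0$.

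Next, for the index assertion, I would invoke Corollary \ref{index:cont:cor}: the index function on $U := \{\chi \neq 0\}$ is continuous, and since its values lie in the discrete set $\{0, \ldots, g\}$, it is locally constant, i.e.\ constant on each connected component of $U$. The neighbourhood of $\eta$ on which $\chi$ is nonzero is contained in a single connected component of $U$, so $\ii(\mu_a) = \ii(\eta)$ for all sufficiently large $a$. Finally, since $a\eta + \xi = a \cdot \mu_a$ and both $\mu_a$ and $a\eta + \xi$ are elements of $\N^1(X)_\QQ$ differing by the positive rational scalar $a$, the scale-invariance of the index on $\N^1(X)_\QQ$ (which follows from the definition $\ii(\mu) := \ii(b\mu)$ for a suitable positive integer $b$, together with the fact $\ii(L) = \ii(L^n)$ for positive $n$ recalled from \cite[p.~148]{Mum}) yields $\ii(a\eta + \xi) = \ii(\mu_a) = \ii(\eta)$.

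There is no serious obstacle here: all of the substantive work has been done in establishing Corollary \ref{index:cont:cor}. The only minor point to verify carefully is the scale-invariance $\ii(c\mu) = \ii(\mu)$ for positive rational $c$ and non-degenerate $\mu \in \N^1(X)_\QQ$, which reduces to clearing denominators and applying $\ii(L^n) = \ii(L)$ for $n \geq 1$.
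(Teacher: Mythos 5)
Your proposal is correct and follows essentially the same route as the paper: rescale to $\mu_a = \eta + \tfrac{1}{a}\xi$, note that $\mu_a$ eventually lies in a (connected) ball around $\eta$ inside the non-degenerate locus $U$, invoke the local constancy of the index from Proposition \ref{constant:index}/Corollary \ref{index:cont:cor}, and transfer back via $a\eta + \xi = a\mu_a$ using the scale-invariance of the index on $\N^1(X)_\QQ$. Your explicit check of the homogeneity identity $\chi(a\eta+\xi) = a^g\chi(\mu_a)$ and of the scale-invariance of $\ii$ under positive rational scaling is a slightly more careful rendering of the same argument.
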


\begin{proof}
Let $S$ be a connected component of $U$ containing $\eta$.   Since $S$ is open there exists an open ball $B_\epsilon$ around $\eta$ and contained in $S$.  Then $\eta + \frac{1}{a} \xi \in B_\epsilon$ for all sufficiently large integers $a$.  Since $a \eta + \xi = a (\eta + \frac{1}{a} \xi)$, we conclude that 
$$\text{$\chi(a \eta + \xi) \not = 0$ and 
$\ii(a \eta + \xi) = \ii(\eta + \frac{1}{a} \xi) = \ii(\eta)$,}$$ for all sufficiently large integers $a$.
\end{proof}

\subsection{Relation to the asymptotic cohomological functions of A. K{\"u}ronya}
If $X$ is a complex abelian variety, then 
A. K{\"u}ronya gave an explicit description of his asymptotic cohomological functions 
$$\widehat{h}^q \colon \N^1(X)_\RR \rightarrow \RR$$ 
restricted to the non-degenerate locus of $\N^1(X)_\RR$, see \cite[\S 3.1]{Kur}. 

Using Corollary \ref{index:cont:cor}, together with work of Kempf \cite[Theorem 1, p. 96]{Mum:Quad:Eqns}, see \cite[Corollaries 3.5.4 and 3.5.1]{BL} for the complex analytic case, we can extend K{\"u}ronya's calculations so that they apply to an arbitrary abelian variety $X$, defined over an algebraically closed field of arbitrary characteristic, and every class in $\N^1(X)_\RR$.  

To this end, let $X$ be an abelian variety defined over an algebraically closed field.   If $L$ is a line bundle on $X$ and $\chi(L) = 0$, then using \cite[Theorem 1, p. 96]{Mum:Quad:Eqns}, or \cite[Corollaries 3.5.4 and 3.5.1]{BL} for the complex analytic case, we deduce that $\widehat{h}^q(X, L) = 0$ for all $q$.  Combining this fact with Corollary \ref{index:cont:cor}, we deduce that the functions
$$ \text{$ \widehat{h}^q \colon \N^1(X)_\RR \rightarrow \RR$ defined by }
 \widehat{h}^q (\eta) =  \begin{cases} 
g ! \chi(\eta) (-1)^{q} & \text{ if $\chi(\eta) \not = 0$ and $q = \ii(\eta)$} \\
0 & \text{ otherwise}
\end{cases} $$
are continuous and extend the functions 
$$ \text{$\widehat{h}^q \colon \N^1(X) \rightarrow \RR$ defined by $\eta \mapsto g ! \limsup\limits_n  \frac{h^q(X, L^n)}{n^g}$,}$$ where $L$ is any line bundle on $X$ whose numerical class equals $\eta$.

\section{Families of cup-product problems}\label{families}

Let $X$ be an abelian variety and let
$(E,F)$ be a pair of vector bundles on $X$ which satisfies the pair index condition.  
In this section, we relate the cup-product maps 
$$\cup(T^*_x E, F) : \H^{\ii(T^*_x E)}(X,T^*_x E) \otimes \H^{\ii(F)}(X,F) \rightarrow \H^{\ii(T^*_x(E) \otimes F)}(X, T^*_x(E) \otimes F) \text{,}$$
for $x \in X$, to the fiberwise evaluation map 
$$ \H^0(X, \R^{\ii(E \otimes F)}_{p_1*}(m^*E \otimes p_2^* F)) \otimes \kappa(x) \rightarrow \R^{\ii(E \otimes F)}_{p_1*}(m^*E \otimes p_2^* F){\mid_x} \text{,}$$ of the sheaf $\R^{\ii(E \otimes F)}_{p_1*}(m^*E \otimes p_2^* F)$ on $X$.

Our first proposition clarifies the nature of the sheaf $\R^{\ii(E \otimes F)}_{p_1 *}(m^* E \otimes p_2^* F)$.

\begin{proposition}\label{mainProp0}
Let $(E,F)$ be a pair of vector bundles on $X$ which satisfies the pair index condition.  The sheaf $$ \R^{\ii(E \otimes F)}_{p_1*}(m^*E \otimes p_2^* F)$$ is a rank $|\chi(E\otimes F) |$ vector bundle on $X$.  
In addition, we have 
$$\H^0(X,\R^{\ii(E \otimes F)}_{p_1*}(m^*E \otimes p_2^* F)) = \H^{\ii(E \otimes F)}(X\times X, m^*E \otimes p_2^* F)$$
and 
$$\H^j(X,\R^{\ii(E \otimes F)}_{p_1*}(m^*E \otimes p_2^* F)) = 0 \text{, for $j > 0$.}$$
\end{proposition}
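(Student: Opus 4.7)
The plan is to reduce the statement to Grauert's cohomology and base change theorem on the one hand, and to a Künneth computation after pulling back by a suitable automorphism of $X \times X$ on the other.

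First I would check fiberwise behaviour. The sheaf $m^* E \otimes p_2^* F$ is locally free, hence $p_1$-flat, and its restriction to the fiber $\{x\} \times X$ is isomorphic to $T_x^* E \otimes F$ because $m$ restricted to $\{x\} \times X$, after identification with $X$, is $T_x$, while $p_2$ restricts to the identity. The pair index condition ensures that $T_x^*(E) \otimes F$ satisfies the index condition with $\ii(T_x^*(E)\otimes F) = \ii(T_x^* E) + \ii(F) = \ii(E) + \ii(F) = \ii(E \otimes F)$ for every $x \in X$, and with the one nonzero cohomology group of dimension $|\chi(E \otimes F)|$ (independent of $x$). Applying cohomology and base change (e.g.\ \cite[Corollary, p.~50]{Mum}) by descending induction on the cohomological degree, one obtains
\[
\R^i p_{1*}(m^* E \otimes p_2^* F) = 0 \quad \text{for } i \neq \ii(E \otimes F),
\]
and $\R^{\ii(E \otimes F)} p_{1*}(m^* E \otimes p_2^* F)$ is locally free of rank $|\chi(E \otimes F)|$, with fiber at $x$ canonically $\H^{\ii(E \otimes F)}(X, T_x^*(E) \otimes F)$.

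Next I would run the Leray spectral sequence
\[
\E_2^{p,q} = \H^p(X, \R^q p_{1*}(m^* E \otimes p_2^* F)) \Rightarrow \H^{p+q}(X \times X, m^* E \otimes p_2^* F).
\]
By the previous step only the row $q = \ii(E \otimes F)$ survives, so the sequence degenerates, and for every $p \geq 0$
\[
\H^p\bigl(X, \R^{\ii(E \otimes F)} p_{1*}(m^* E \otimes p_2^* F)\bigr) \;\cong\; \H^{p + \ii(E \otimes F)}(X \times X, m^* E \otimes p_2^* F).
\]

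Finally I would compute the right-hand side globally. Consider the automorphism $\sigma \colon X \times X \to X \times X$ given by $\sigma(x,y) = (x+y, y)$; then $p_1 \circ \sigma = m$ and $p_2 \circ \sigma = p_2$, so $\sigma^*(p_1^* E \otimes p_2^* F) \cong m^* E \otimes p_2^* F$. Since $\sigma$ is an isomorphism, Künneth gives
\[
\H^n(X \times X, m^* E \otimes p_2^* F) \;\cong\; \bigoplus_{a+b=n} \H^a(X, E) \otimes \H^b(X, F).
\]
The index condition on $E$ and on $F$ forces this to vanish unless $n = \ii(E) + \ii(F) = \ii(E \otimes F)$, in which case it equals $\H^{\ii(E)}(X,E) \otimes \H^{\ii(F)}(X,F)$. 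Substituting into the Leray identification gives both claimed cohomological statements.

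The only step with any subtlety is the application of cohomology and base change, which requires flatness of $m^*E \otimes p_2^*F$ over $X$ via $p_1$ (immediate from local freeness) and constancy of the fibrewise Euler characteristic (immediate since translation preserves Euler characteristics); the rest is bookkeeping with the Künneth formula and the spectral sequence.
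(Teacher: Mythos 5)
Your proposal is correct and follows essentially the same route as the paper: fiberwise constancy of cohomology via the pair index condition and flatness, Mumford's cohomology-and-base-change criterion to get local freeness of $\R^{\ii(E\otimes F)}_{p_1*}$ and vanishing of the other direct images, degeneration of the Leray spectral sequence, and the identification $m^*E\otimes p_2^*F\cong\sigma^*(p_1^*E\otimes p_2^*F)$ plus K\"unneth to compute $\H^*(X\times X,m^*E\otimes p_2^*F)$. The only cosmetic difference is that you make the automorphism $\sigma$ and the K\"unneth step explicit here, whereas the paper defers that bookkeeping to the proof of its Proposition on fiberwise evaluation maps.
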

\begin{proof}
Let $\mathcal{N} := m^*E \otimes p_2^* F$ and let $\mathcal{E} := \R^{\ii(E \otimes F)}_{p_1*}(\mathcal{N})$.  Note that $\mathcal{N}$ is flat over $X$ via $p_1$.
Since the vector bundles 
$\mathcal{N}\mid_ {\{x\} \times X} = T^*_x(E) \otimes F \text{,}$ for $x \in X$, satisfy the index condition, and have index $\ii(E \otimes F)$, we have that, for $j \in \ZZ$, 
\begin{equation}\label{a} \dim_{\kappa(x)} \H^j(\{x\}\times X,\mathcal{N}\mid_ {\{x\} \times X}) = \begin{cases}  |\chi(T_x^*(E) \otimes F)| & \text{when $j= \ii(E \otimes F)$} \\ 0 & \text{ when $j \not = \ii(E \otimes F)$.} \end{cases}\end{equation}  
Since the Euler characteristic of a flat family of sheaves over a connected base is constant, 
we see that, for $j$ fixed, the function $x\mapsto \dim_{\kappa(x)} \H^j(\{x\}\times X, \mathcal{N}\mid_ {\{x\} \times X})$ is constant.

This is condition (i) of \cite[Corollary 3, p. 40]{Mum} which is equivalent to the condition that $\R^j_{p_{1*}}(\mathcal{N})$ is a vector bundle and that the natural map $$\R^j_{p_{1*}}(\mathcal{N})\mid_x\rightarrow \H^j(\{x\}\times X, \mathcal{N}\mid_ {\{x\} \times X})$$ is an isomorphism.  Using \eqref{a} we conclude that $\mathcal{E}$ is a vector bundle of the asserted rank and, moreover, if $j\not = \ii(E \otimes F) $ then $\R^j_{p_{1*}}(\mathcal{N}) = 0$.

To compute the cohomology groups of $\mathcal{E}$, we use the Leray spectral sequence 
$$ \E_2^{p,q} = \H^p(X,\R^q_{p_{1 *}}(\mathcal{N})) \Rightarrow \H^{p+q}(X\times X, \mathcal{N})\text{.}$$  Since $\R^j_{p_{1*}}(\mathcal{N}) = 0$, when $j \not = \ii(E \otimes F)$, we have $\H^{\ell - \ii(E \otimes F)}(X, \mathcal{E}) = \H^{\ell}(X \times X , \mathcal{N})$, for all $\ell$.  Notably $\H^0(X, \mathcal{E}) = \H^{\ii(E \otimes F)}(X \times X, \mathcal{N})$ while the higher cohomology groups of $\mathcal{E}$ are zero because the cohomology groups of $\mathcal{N}$ are zero when $j \not = \ii(E \otimes F)$.  
\end{proof}

Our next proposition generalizes \cite[Proposition 2.1]{Par} (see also \cite[Proposition 5.2]{Par:Popa:II}), which is used in  Pareschi's proof of Lazarsfeld's conjecture.  (See \cite[p. 660--663]{Par} and also \cite[\S 6]{Par:Popa:II}.)
      
\begin{proposition}\label{mainprop1}  Let $(E,F)$ be a pair of vector bundles on $X$ which satisfies the pair index condition.  
The image of the cup-product map
$$ \cup(T^*_xE,F): \H^{\ii(T^*_xE)}(X,T^*_xE)\otimes \H^{\ii(F)}(X,F)\rightarrow \H^{\ii(T^*_x(E) \otimes F)}(X,T^*_x(E)\otimes F)\text{,} $$
for all $x \in X$, coincides with the image of the fiberwise evaluation map
\begin{equation}\label{fiberwise:eqn} \H^0(X, \R^{\ii(E \otimes F)}_{p_1*}(m^*E \otimes p_2^* F)) \otimes \kappa(x) \rightarrow \R^{\ii(E \otimes F)}_{p_1*}(m^*E \otimes p_2^* F){\mid_x} \text{.} \end{equation} 
\end{proposition}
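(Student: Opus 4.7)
My plan is to identify the fiberwise evaluation map with a K\"unneth-type restriction map, and then to recognize it as a cup-product map after an appropriate change of variables on $X \times X$. The argument proceeds in three main steps: first translate both sides into cohomology groups on $X \times X$; then trivialize the twist $m^*E$ via an automorphism of $X \times X$; finally read off the cup product via the K\"unneth formula.

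First, I would invoke Proposition~\ref{mainProp0} together with the theorem of cohomology and base change to obtain the canonical identifications
$$\H^0(X, \mathcal{E}) = \H^{\ii(E \otimes F)}(X \times X,\, m^*E \otimes p_2^*F)
\quad \text{and} \quad
\mathcal{E}|_x = \H^{\ii(E \otimes F)}(X,\, T_x^*E \otimes F),$$
where $\mathcal{E} := \R^{\ii(E \otimes F)}_{p_1 *}(m^*E \otimes p_2^*F)$. Under these identifications, the fiberwise evaluation map becomes the cohomological restriction $j_x^*$ along the inclusion $j_x \colon X \hookrightarrow X \times X$, $y \mapsto (x, y)$. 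The task is thus to show that the image of $j_x^*$ coincides with the image of $\cup(T_x^*E, F)$.

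Next, I would introduce the automorphism $\Phi \colon X \times X \to X \times X$ defined by $\Phi(u, v) = (u+v, v)$. Since $p_1 \circ \Phi = m$ and $p_2 \circ \Phi = p_2$, pullback gives $\Phi^*(p_1^*E \otimes p_2^*F) = m^*E \otimes p_2^*F$, and hence an isomorphism
$$\Phi^* \colon \H^{\ii(E \otimes F)}(X \times X, \, p_1^*E \otimes p_2^*F) \xrightarrow{\sim} \H^{\ii(E \otimes F)}(X \times X,\, m^*E \otimes p_2^*F).$$
By the K\"unneth formula, combined with the index-condition hypotheses $\H^i(X, E) = 0$ for $i \neq \ii(E)$, $\H^j(X, F) = 0$ for $j \neq \ii(F)$, and $\ii(E) + \ii(F) = \ii(E \otimes F)$, the left-hand side collapses to the single summand $\H^{\ii(E)}(X, E) \otimes \H^{\ii(F)}(X, F)$, with $\alpha \otimes \beta$ corresponding to $p_1^*\alpha \cup p_2^*\beta$. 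Because $p_1 \circ \Phi \circ j_x = T_x$ and $p_2 \circ \Phi \circ j_x = \id_X$, naturality of cup products under the composite pullback then gives
$$j_x^* \circ \Phi^*(\alpha \otimes \beta) = (p_1 \Phi j_x)^*\alpha \cup (p_2 \Phi j_x)^*\beta = T_x^*\alpha \cup \beta \in \H^{\ii(E \otimes F)}(X, T_x^*E \otimes F).$$
Since $T_x^*$ is an isomorphism on cohomology and $\Phi^*$ is an isomorphism, the image of $j_x^*$ equals the image of the cup-product map $\cup(T_x^*E, F)$, as required.

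The main thing to watch for is the careful bookkeeping of the various canonical identifications: one must verify that the base change isomorphism for $\mathcal{E}|_x$ really does intertwine the fiberwise evaluation with the cohomological restriction $j_x^*$, and that all K\"unneth and cup-product manipulations are consistent with the automorphism $\Phi$. The calculations themselves are direct, but the argument only works when these compatibilities are assembled carefully.
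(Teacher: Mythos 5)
Your proposal is correct and follows essentially the same route as the paper: both identify $\H^0(X,\mathcal{E})$ and $\mathcal{E}|_x$ with cohomology on $X\times X$ and on the fiber $\{x\}\times X$ via Proposition~\ref{mainProp0} and base change, use the automorphism $(u,v)\mapsto(u+v,v)$ of $X\times X$ (the paper's identification of the triples $(X\times X, m, p_2)$ and $(X\times X, p_1, p_2)$) to apply the K\"unneth formula, and read off the restriction map as the cup product. Your write-up simply makes explicit the commutative diagram the paper records.
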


\begin{proof} Let  $\mathcal{N} := m^*E\otimes p_2^*F$ and let $\mathcal{E}:= \R^{\ii(E\otimes F)}_{p_{1*}}(\mathcal{N})$.
If $x\in X$ then, using Proposition \ref{mainProp0}, the isomorphisms 
$$(X\times X, T_x \times 0,p_2) = (X\times X, p_1,p_2) = (X\times X, m,p_2) \text{,}$$ and repeated application of the K{\"u}nneth formula,  we obtain the commutative diagram
$$\xymatrix{
  \H^0(X,\mathcal{E})\otimes \kappa(x) \ar[r]^-{\operatorname{eval}} & \mathcal{E}{\mid_x} \\ 
\H^{\ii(E\otimes F)}(X\times X,\mathcal{N}) \ar[r]^-{\operatorname{res}} \ar @{=} [u] & \H^{\ii(T^*_x(E) \otimes F)}(\{x\}\times X,\mathcal{N}\mid_ {\{x\} \times X}) \ar @{=} [u] \\
 \H^{\ii(T^*_xE)}(X,T^*_xE)\otimes \H^{\ii(F)}(X,F) \ar[r] \ar @{=} [u]  \ar[r]^-{\cup} & \H^{\ii(T^*_x(E) \otimes F)}(X, T^*_x(E) \otimes F) \ar @{=}[u]
}$$
from which the assertion follows.
\end{proof}


\begin{corollary}\label{cor:Par:prop}
Let $(E,F)$ be a pair of vector bundles on $X$ which satisfies the pair index condition.  The following assertions hold.
\begin{enumerate}
\item{There exists a point $x$ of $X$ for which the cup-product map $\cup(T^*_x E, F)$ is nonzero.}
\item{The cup-product maps $\cup(T^*_x E, F)$ are nonzero and surjective, for all $x \in X$, if and only if the vector bundle 
$$\R^{\ii(E \otimes F)}_{p_1*}(m^* E \otimes p_2^* F)$$
is globally generated.}
\item{The locally free sheaf $\R^{\ii(E \otimes F)}_{p_1*}(m^*E\otimes p_{2}^* F)$ is nontrivial unless $X$ is a point.}
\end{enumerate}
\end{corollary}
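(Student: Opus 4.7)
The common thread for all three parts is the pair of results just established: Proposition \ref{mainProp0}, which shows that $\mathcal{E} := \R^{\ii(E \otimes F)}_{p_1*}(m^*E \otimes p_2^* F)$ is a locally free sheaf of rank $|\chi(E \otimes F)| > 0$ with $\H^j(X,\mathcal{E}) = 0$ for $j > 0$ and $\H^0(X, \mathcal{E}) = \H^{\ii(E \otimes F)}(X \times X, m^*E \otimes p_2^* F)$; and Proposition \ref{mainprop1}, which identifies the image of $\cup(T^*_x E, F)$ with the image of the fiberwise evaluation map $\H^0(X, \mathcal{E}) \otimes \kappa(x) \to \mathcal{E}|_x$.

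For (a), I would first show that $\H^0(X, \mathcal{E})$ is nonzero. The automorphism $\phi \colon X \times X \to X \times X$ defined by $\phi(x, y) := (x-y, y)$ satisfies $m \circ \phi = p_1$ and $p_2 \circ \phi = p_2$, so $\phi$ identifies $m^*E \otimes p_2^*F$ with $p_1^*E \otimes p_2^*F$. Combining this with the K\"unneth formula and the hypothesis that $\ii(E) + \ii(F) = \ii(E \otimes F)$ yields
\[
\H^0(X, \mathcal{E}) \cong \H^{\ii(E \otimes F)}(X \times X, p_1^*E \otimes p_2^*F) \cong \H^{\ii(E)}(X, E) \otimes \H^{\ii(F)}(X, F),
\]
which is nonzero by the index condition on $E$ and $F$. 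Since $\mathcal{E}$ is locally free, a nonzero global section is nonzero at some point $x \in X$, so the fiberwise evaluation map --- and therefore $\cup(T^*_x E, F)$ via Proposition \ref{mainprop1} --- is nonzero there. Part (b) is then immediate: global generation of $\mathcal{E}$ is by definition the surjectivity of the fiberwise evaluation map at every $x$, and via Proposition \ref{mainprop1} this translates to surjectivity of every $\cup(T^*_x E, F)$; surjectivity onto the nonzero target $\H^{\ii(T^*_x(E) \otimes F)}(X, T^*_x(E) \otimes F)$ forces nonvanishing.

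The main obstacle is (c), though it admits a short resolution once one exploits the higher cohomology vanishing built into $\mathcal{E}$. Suppose for contradiction that $\mathcal{E} \cong \Osh_X^{r}$ with $r := |\chi(E \otimes F)| > 0$. For an abelian variety of dimension $g$ one has $\dim \H^g(X, \Osh_X) = 1$ (the canonical bundle is trivial, so this follows from Serre duality applied to $\H^0(X,\Osh_X) = \mathbb{K}$), hence $\H^g(X, \mathcal{E}) \cong \mathbb{K}^r$ is nonzero whenever $g \geq 1$. But Proposition \ref{mainProp0} supplies $\H^g(X, \mathcal{E}) = 0$ for $g > 0$, forcing $g = 0$, i.e.\ $X$ is a point.
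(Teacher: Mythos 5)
Your proposal is correct and follows essentially the same route as the paper: part (a) via a nonzero global section of $\R^{\ii(E\otimes F)}_{p_1*}(m^*E\otimes p_2^*F)$ obtained from Proposition \ref{mainProp0} and the K\"unneth formula, part (b) directly from Proposition \ref{mainprop1}, and part (c) by contradiction using the vanishing of the higher cohomology of this bundle against $\H^g(X,\Osh_X)\neq 0$. Your explicit use of the shearing automorphism $(x,y)\mapsto(x-y,y)$ to reduce to $p_1^*E\otimes p_2^*F$ just makes visible the identification the paper already invokes in proving Proposition \ref{mainprop1}.
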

\begin{proof}
To prove (a), using the K{\"u}nneth formula and Proposition \ref{mainProp0}, we deduce that 
$$\R^{\ii(E \otimes F)}_{p_1*}(m^*E\otimes p_{2}^* F)$$ admits a nonzero global section.  The zero locus of this section is a proper closed subset.  Consequently, if $x$ is not in this closed subset then, using Proposition \ref{mainprop1}, we deduce that $\cup(T^*_xE,F)$ is nonzero.

Part (b) is also a consequence of Proposition \ref{mainprop1}.  Indeed, if $x \in X$ then the image of $\cup(T^*_x E, F)$ coincides with the image of the fiberwise evaluation map \eqref{fiberwise:eqn}.  Hence, if $\cup(T^*_x E , F)$ is surjective, for all $x \in X$, then so is \eqref{fiberwise:eqn} so that $\R^{\ii(E \otimes F)}_{p_1*}(m^* E \otimes p_2^* F)$ is globally generated.

Conversely, suppose that the vector bundle $\R^{\ii(E \otimes F)}_{p_1*}(m^* E \otimes p_2^* F)$ is globally generated.  Then, since it has positive rank, the cup-products $\cup(T^*_x E, F)$, for all $x \in X$, are nonzero.  In addition, since $\R^{\ii(E \otimes F)}_{p_1*}(m^* E \otimes p_2^* F)$ is globally generated, the fiberwise evaluation map \eqref{fiberwise:eqn} is surjective.  Hence $\cup(T^*_x E, F)$ is surjective.

To prove (c) if $\R^{\ii(E\otimes F)}_{p_1*}(m^*E \otimes p_{2}^* F)$ is trivial then $\H^g(X,\R^{\ii(E\otimes F)}_{p_1*}(m^*E \otimes p_{2}^* F)) \not = 0$ which contradicts Proposition \ref{mainProp0} when $g$ is positive.
\end{proof}

\section{Proof of results}\label{Proofs}

In \S \ref{other:prelim:results}, we make two remarks which we use to prove Theorems \ref{vanishing:thm}, \ref{Step2}, and \ref{MainTheorem}.  In \S \ref{section:vanishing:theorem}, we prove Theorem \ref{vanishing:thm} and state two consequences, one of which we use to establish Theorem \ref{Step2}.  We prove Theorem \ref{Step2} in \S \ref{Step2:proof}.  Our first proof of Theorem \ref{MainTheorem} is contained in \S \ref{proof:main:theorem} while our second, and its relation to work of Pareschi-Popa, 
is contained in \S \ref{main:theorem:proof2}.

\subsection{Two preliminary remarks}\label{other:prelim:results}
Let $X$ be a projective variety of dimension $g$, and let $A$ be a globally generated ample line bundle on $X$.  A coherent sheaf $F$ on $X$ is said to be \emph{$m$-regular with respect to $A$} if $\H^i(X,F\otimes A^{\otimes m-i}) = 0$ for all $i > 0$, see \cite[Lecture 14]{Mum66}, \cite[p. 307]{Kle}, \cite[Definition 1.8.4]{Laz}, or \cite[Definition 2.1]{Keeler2010} for instance.  We define
$\reg_A(F)$ to be the least integer for which $F$ is $m$-regular with respect to $A$.

One feature of $m$-regularity is that it controls the shape of a resolution of a coherent sheaf.

\begin{proposition}[See also {\cite[Corollary 3.2, p. 240]{Ara}}]\label{m:reg:prop} 
Let $A$ be a globally generated ample line bundle on a $g$-dimensional projective variety $X$ and set $p := \max \{1,\reg_A(\Osh_X) \}$.  If a coherent sheaf $F$ on $X$ is $m$-regular with respect to $A$ then there exists an exact complex of sheaves $$0\rightarrow F_{g+1} \rightarrow F_g \rightarrow \dots \rightarrow F_0 \rightarrow F \rightarrow 0$$ where, for $0 \leq \ell \leq g$, $F_\ell$ is a finite direct sum of copies of $A^{-p\ell - m}$.
\end{proposition}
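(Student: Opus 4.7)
The plan is to construct the resolution iteratively. Set $K_0 := F$, which is $(p\cdot 0 + m)$-regular by hypothesis, and suppose inductively that $K_\ell$ is $(p\ell + m)$-regular with respect to $A$. Then $K_\ell \otimes A^{p\ell + m}$ is globally generated, so the evaluation map
$$ F_\ell := \H^0\bigl(X, K_\ell \otimes A^{p\ell+m}\bigr) \otimes_{\mathbb{K}} A^{-(p\ell+m)} \twoheadrightarrow K_\ell $$
is surjective, and $F_\ell$ is a finite direct sum of copies of $A^{-(p\ell+m)}$, as required. Set $K_{\ell+1} := \ker(F_\ell \to K_\ell)$. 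Splicing the short exact sequences $0 \to K_{\ell+1} \to F_\ell \to K_\ell \to 0$ for $\ell = 0,\dots,g$ yields the desired complex, which we terminate by declaring $F_{g+1} := K_{g+1}$ (the statement places no shape constraint on $F_{g+1}$).

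The crux is to close the induction by showing that $K_{\ell+1}$ is $(p(\ell+1)+m)$-regular. Twisting $0 \to K_{\ell+1} \to F_\ell \to K_\ell \to 0$ by $A^{p(\ell+1)+m-i}$ and passing to cohomology, for each $i \geq 1$ the group $\H^i\bigl(X, K_{\ell+1} \otimes A^{p(\ell+1)+m-i}\bigr)$ is sandwiched between
$$ \H^{i-1}\bigl(X, K_\ell \otimes A^{p(\ell+1)+m-i}\bigr) \quad \text{and} \quad \H^i\bigl(X, F_\ell \otimes A^{p(\ell+1)+m-i}\bigr). $$
The right-hand group is a direct sum of copies of $\H^i(X, A^{p-i})$, which vanishes by the defining condition $p \geq \reg_A(\Osh_X)$. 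For $i \geq 2$, the left-hand group vanishes because $K_\ell$, being $(p\ell+m)$-regular, is also $(p\ell+m+p-1)$-regular by Mumford's shift (using $p \geq 1$), and the relevant twist is $(p\ell+m+p-1)-(i-1)$.

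The delicate case is $i = 1$, where the vanishing of $\H^1\bigl(X, K_{\ell+1} \otimes A^{p(\ell+1)+m-1}\bigr)$ reduces, via the long exact sequence and the vanishing of $\H^1(F_\ell \otimes A^{p(\ell+1)+m-1})$ noted above, to surjectivity of the multiplication map
$$ \H^0\bigl(X, K_\ell \otimes A^{p\ell+m}\bigr) \otimes \H^0\bigl(X, A^{p-1}\bigr) \longrightarrow \H^0\bigl(X, K_\ell \otimes A^{p\ell+m+p-1}\bigr). $$
This is the classical multiplication-surjectivity consequence of Castelnuovo-Mumford regularity applied to $K_\ell$ (and is trivially surjective when $p=1$, since then $A^{p-1} = \Osh_X$). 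This $i=1$ step is the only one demanding more than the definition of $p$ and Mumford's shift; everything else follows by careful bookkeeping of the twist indices.
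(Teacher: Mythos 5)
Your proof is correct and follows essentially the same route as the paper: iterate the evaluation map of the twisted sheaf and track the regularity of the successive kernels. The only difference is that the paper simply cites Kleiman (Proposition 1, p.~307) or Lazarsfeld (Theorem 1.8.5) for the key fact that the kernel of the evaluation map of an $m$-regular sheaf is $(p+m)$-regular, whereas you prove that fact directly — including the genuinely delicate $i=1$ case via multiplication-surjectivity — which is exactly the content of the cited results.
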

\begin{proof}
Using, for example \cite[Proposition 1, p. 307]{Kle} or \cite[Theorem 1.8.5, p. 100]{Laz}, we observe that, if $F$ is $m$-regular with respect to $A$, then the kernel of the evaluation map 
$$\H^0(X, F\otimes A^{\otimes m}) \otimes A^{\otimes -m} \rightarrow F$$
is $p+m$-regular.  Using this fact, the proposition follows easily using induction.
\end{proof}

The following lemma relates the vanishing of certain terms on the $\E_1$ page of an appropriate spectral sequence to the vanishing of a particular cohomology group of a sheaf on a projective variety.

\begin{lemma}[See also {\cite[Lemma 2.1, p. 235]{Ara}}]\label{ss:lemma}
  Let $E$ be a vector bundle, let $F$ be a coherent sheaf, and let
$\mathcal{F}: 0 \rightarrow F_{g+1} \rightarrow F_g \rightarrow \dots \rightarrow F_0 \rightarrow F \rightarrow 0$ be an exact complex of sheaves on a $g$-dimensional projective variety $X$.   If $\H^{\ell + q}(X,E\otimes F_\ell) = 0$, whenever  $0\leq \ell \leq g - q$, 
then $\H^q(X,E \otimes F) = 0$.  
\end{lemma}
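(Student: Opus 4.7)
The plan is to tensor the exact complex $\mathcal{F}$ by the locally free sheaf $E$, break the resulting exact sequence into short exact sequences via the syzygy kernels, and then chase the cohomology, finishing with Grothendieck's vanishing theorem. I expect no real obstacle beyond bookkeeping of indices; the statement is essentially a formal consequence of Grothendieck vanishing together with the given hypotheses.

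In more detail, since $E$ is locally free, tensoring $\mathcal{F}$ by $E$ preserves exactness and yields an exact complex
\[
0 \to E \otimes F_{g+1} \xrightarrow{d_{g+1}} E \otimes F_g \xrightarrow{d_g} \cdots \xrightarrow{d_1} E \otimes F_0 \xrightarrow{d_0} E \otimes F \to 0.
\]
For $0 \leq \ell \leq g$, I introduce the coherent syzygy sheaves $Z_\ell := \ker(d_\ell) = \image(d_{\ell+1})$. These fit into short exact sequences
\[
0 \to Z_0 \to E \otimes F_0 \to E \otimes F \to 0 \quad \text{and} \quad 0 \to Z_\ell \to E \otimes F_\ell \to Z_{\ell-1} \to 0 \;\; (1 \leq \ell \leq g),
\]
with $Z_g \cong E \otimes F_{g+1}$.

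The long exact sequence in cohomology of the first short exact sequence, combined with the hypothesis $\H^q(X, E \otimes F_0) = 0$, produces an injection $\H^q(X, E \otimes F) \hookrightarrow \H^{q+1}(X, Z_0)$. Similarly, the $\ell$-th short exact sequence together with $\H^{q+\ell}(X, E \otimes F_\ell) = 0$ produces $\H^{q+\ell}(X, Z_{\ell-1}) \hookrightarrow \H^{q+\ell+1}(X, Z_\ell)$. The hypothesis gives exactly the range $0 \leq \ell \leq g - q$ needed to compose these injections, yielding
\[
\H^q(X, E \otimes F) \hookrightarrow \H^{g+1}(X, Z_{g-q}).
\]
Since $Z_{g-q}$ is a coherent sheaf on the $g$-dimensional projective variety $X$, Grothendieck's vanishing theorem gives $\H^{g+1}(X, Z_{g-q}) = 0$, and therefore $\H^q(X, E \otimes F) = 0$, as desired.
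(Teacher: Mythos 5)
Your proof is correct. It takes a more elementary route than the paper, which instead invokes the hypercohomology spectral sequence of the exact complex $E \otimes \mathcal{F}$: there one has $\E_1^{-\ell,i} = \H^i(X, E\otimes F_\ell)$ abutting to $\H^{i-\ell}(X, E\otimes F)$, and the vanishing of every $\E_1$-term on the antidiagonal $i-\ell = q$ (by hypothesis for $0 \leq \ell \leq g-q$, and by reasons of dimension or of the length of the complex otherwise) kills the abutment in total degree $q$. Your dévissage into syzygy sheaves $Z_\ell$ and the chain of injections $\H^q(X,E\otimes F) \hookrightarrow \H^{q+1}(X,Z_0) \hookrightarrow \cdots \hookrightarrow \H^{g+1}(X,Z_{g-q})$ is exactly the unrolled version of that argument; it makes explicit where Grothendieck's vanishing theorem enters (to terminate the chain at cohomological degree $g+1$), a point the spectral-sequence formulation absorbs silently into the vanishing of the $\E_1$-terms with $\ell > g-q$. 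What your approach buys is transparency and the avoidance of spectral-sequence machinery; what the paper's buys is brevity and uniformity with the conventions it sets up in the introduction for filtered complexes. One small remark: your argument as written addresses the nontrivial range $0 \leq q \leq g$; for $q > g$ the conclusion is immediate from Grothendieck vanishing and for $q < 0$ it is trivial, so nothing is lost.
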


\begin{proof}
Since $E$ is a vector bundle the complex $E \otimes \mathcal{F}$ is also exact.  Associated to the complex $E \otimes \mathcal{F}$ is a spectral sequence which has 
$$\E^{-\ell,i}_1 = 
\begin{cases}  \H^i(X,E\otimes F_\ell) & \text{for $0\leq \ell \leq g + 1$ and $i \in \ZZ$} \\ 
			  0 & \text{for $\ell > g + 1$ or $\ell < 0$ and $i \in \ZZ$}
			  \end{cases} \Rightarrow \H^{i - \ell}(X,F\otimes E) \text{.} $$  
Since $\E^{-\ell,i}_1$ is zero, whenever $i - \ell=q$, we conclude that $\H^q(X,E \otimes F)$ is zero as well.
\end{proof}


\subsection{Proof and immediate consequences of Theorem \ref{vanishing:thm}}\label{section:vanishing:theorem}
We now use \S \ref{index:section} and \S \ref{other:prelim:results} to prove Theorem \ref{vanishing:thm}.  

\begin{proof}[Proof of Theorem \ref{vanishing:thm}]  Fix a globally generated ample line bundle $A$ on $Y$.  Furthermore, let $m := \reg_A(F)$, $q := \ii(L)$, and $p := g+1$.  By Corollary \ref{index:lemma}, there exists a positive integer $n_0$ with the property that
\begin{equation}\label{vanishing:eqn:1}
\text{$\chi(A^{-p \ell - m} \otimes L^n) \not = 0$ and $\ii(A^{-p \ell - m} \otimes L^n) = q$,  }
\end{equation}
for all $n \geq n_0$ and all $0 \leq \ell \leq g$.

Fix such an $n_0$, let $n \geq n_0$, let $f: X \rightarrow Y$ be an isogeny, and let $\alpha \in \Pic^0(X)$.  Using \eqref{vanishing:eqn:1} and Mumford's index theorem, see also \cite[Corollary 9.26, p. 140]{MV}, we deduce that
\begin{equation}\label{vanishing:eqn:2}
\text{$\chi(f^*(A^{-p\ell - m} \otimes L^n) \otimes \alpha) \not = 0$ and $\ii(f^*(A^{-p \ell - m} \otimes L^n) \otimes \alpha) = q$,}
\end{equation}
for all $n \geq n_0$ and all $0 \leq \ell \leq g$. 
(Note that $f$ need not be a separable isogeny.)

Using \eqref{vanishing:eqn:2} we conclude that
\begin{equation}\label{vanishing:eqn:3}
\text{$\H^{\ell + j}(X,f^*(A^{-p \ell - m} \otimes L^n) \otimes \alpha) = 0$,}
\end{equation}
whenever $\ell + j \not = q$.  In particular \eqref{vanishing:eqn:3} holds for all $j > q$ and all $0 \leq \ell \leq g - j$.

Since $p = \reg_A(\Osh_Y)$, and since $f$ is flat (this can be deduced, for example, from the corollary of \cite[Theorem 23.1, p. 179]{Mat}) using Proposition \ref{m:reg:prop} we deduce that there exists an exact complex 
\begin{multline*}
0 \rightarrow f^*(F_{g+1}\otimes L^n) \otimes \alpha \rightarrow \oplus f^*(A^{-pg-m} \otimes L^n) \otimes \alpha \rightarrow  \dots  \\
\rightarrow \oplus f^*(A^{-m}\otimes L^n)\otimes \alpha \rightarrow f^*(F\otimes L^n) \otimes \alpha \rightarrow 0
\end{multline*}
of sheaves on $X$.  Using this complex, \eqref{vanishing:eqn:3}, and Lemma \ref{ss:lemma} we conclude that $$\H^j(X, f^*(F \otimes L^n) \otimes \alpha) = 0\text{,}$$ when $j > q$.
\end{proof}

Our first corollary generalizes Corollary \ref{index:lemma} and is used in the proof of Theorem \ref{MainTheorem}.

\begin{corollary}\label{vb:cor}
Let $X$ be an abelian variety and let $E$ be vector bundle on $X$.  Let $L$ be a non-degenerate line bundle on $X$.  There exists an $n_0 > 0$ with the property that, for all $n\geq n_0$, and all $\alpha \in \Pic^0(X)$, 
$$\text{$\H^{j}(X, L^n\otimes E \otimes \alpha)=0$  for $j\not = \ii(L)$, and $\H^{\ii(L)}(X, L^n \otimes E \otimes\alpha) \not = 0$.}$$ In particular, for every line bundle $M$ on $X$, there exists an $n_0$ such that $L^n\otimes M$ is non-degenerate and $\ii(L^n\otimes M) = \ii(L^n)$ for all $n\geq n_0$.
\end{corollary}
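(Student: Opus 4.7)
\emph{Proof proposal.} The plan is to combine Theorem \ref{vanishing:thm} with Serre duality to concentrate the cohomology of $L^n \otimes E \otimes \alpha$ in the single degree $\ii(L)$, and then to use Hirzebruch--Riemann--Roch to upgrade this to genuine non-vanishing.

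First, I would apply Theorem \ref{vanishing:thm} with $Y = X$, $f = \operatorname{id}_X$, and coherent sheaf taken to be $E$, obtaining
$$ \H^j(X, L^n \otimes E \otimes \alpha) = 0 \quad \text{whenever } j > \ii(L), $$
for all sufficiently large $n$ and all $\alpha \in \Pic^0(X)$. To kill cohomology in degrees $j < \ii(L)$, I would apply the same theorem to the non-degenerate line bundle $L^{-1}$ together with the coherent sheaf $E^{\vee}$. Since the canonical bundle of $X$ is trivial, Serre duality gives $\ii(L^{-1}) = g - \ii(L)$ and identifies $\H^j(X, L^{-n} \otimes E^{\vee} \otimes \alpha)$ with the dual of $\H^{g-j}(X, L^n \otimes E \otimes \alpha^{-1})$. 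Theorem \ref{vanishing:thm} thus forces vanishing of $\H^{g-j}(X, L^n \otimes E \otimes \alpha^{-1})$ whenever $j > g - \ii(L)$, and because $\alpha \mapsto \alpha^{-1}$ is a bijection of $\Pic^0(X)$, this is equivalent to $\H^k(X, L^n \otimes E \otimes \alpha) = 0$ for $k < \ii(L)$, uniformly in $\alpha$ and for all $n$ large.

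For the non-vanishing in degree $\ii(L)$, I would observe that every $\alpha \in \Pic^0(X)$ is numerically trivial, so $\chi(L^n \otimes E \otimes \alpha) = \chi(L^n \otimes E)$. By Hirzebruch--Riemann--Roch with $\operatorname{td}(X) = 1$, this is a polynomial in $n$ of degree $g$ whose leading coefficient equals $(\rank E)\chi(L)$, which is nonzero because $L$ is non-degenerate. The vanishing already established then forces
$$ (-1)^{\ii(L)}\dim \H^{\ii(L)}(X, L^n \otimes E \otimes \alpha) = \chi(L^n \otimes E) \neq 0 $$
for all sufficiently large $n$, uniformly in $\alpha \in \Pic^0(X)$.

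The final ``in particular'' clause follows either directly from Corollary \ref{index:lemma} applied to the numerical classes $[L]$ and $[M]$ in $\N^1(X)$, or by specializing the main statement to the rank-one bundle $E = M$ and $\alpha = \Osh_X$, which yields $\ii(L^n \otimes M) = \ii(L) = \ii(L^n)$ and $\chi(L^n \otimes M) \neq 0$ for all large $n$. The only delicate point is that the Serre duality step preserves uniformity in $\alpha$; this is automatic because Theorem \ref{vanishing:thm} is already stated uniformly in $\alpha \in \Pic^0(X)$. I do not anticipate any real obstacle beyond bookkeeping the two integer thresholds arising from the applications of Theorem \ref{vanishing:thm} and taking their maximum.
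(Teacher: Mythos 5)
Your proposal is correct and follows essentially the same route as the paper: Theorem \ref{vanishing:thm} applied to $(L,E)$ kills cohomology above $\ii(L)$, the same theorem applied to $(L^{-1},E^{\vee})$ combined with Serre duality kills it below $\ii(L)$, and Hirzebruch--Riemann--Roch (with $\chi$ unchanged by twisting by $\alpha\in\Pic^0(X)$) gives the non-vanishing in degree $\ii(L)$ for $n\gg 0$. Your explicit remark on uniformity in $\alpha$ is a welcome clarification of a point the paper's own write-up glosses over.
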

\begin{proof} Let $q := \ii(L)$.  Then $\ii(L^{-1}) = g - q$.  If $\alpha \in \Pic^0(X)$, then by Theorem \ref{vanishing:thm}, there exists positive integers $n_0$ and $n'_0$ such that 
$$\text{$\H^i(X,E\otimes L^n\otimes \alpha) = 0$, for all $n\geq n_0$ and all $i>q$,}$$ and 
$$\text{$\H^i(X, E^{\vee}\otimes L^{-n'}\otimes \alpha^{-1}) = 0$, for all $n'\geq n'_0$ and all $i>g-q$.}$$
Thus, by Serre duality,  
$$\text{$\H^i(X,E\otimes L^n\otimes \alpha) = 0$ for all $i\not = q$ and all $n\geq \max \{n_0,n'_0\}$.}$$

It remains to show that there exists an $n_0$ such that 
$\H^q(X,E\otimes L^n\otimes \alpha)\not = 0$
 for all $n\geq n_0$ and all $\alpha \in \Pic^0(X)$.  By the Hirzebruch-Riemann-Roch Theorem, we have 
\begin{equation}\label{HRRT}
\chi(E\otimes L^n\otimes \alpha) = \int_X \left( \frac{(\rank E)}{g!} n^g  \cc_1(L)^g + \frac{n^{g-1}}{(g-1)!}  \cc_1(E)\cc_1(L)^{g-1}+\dots\right) \text{.}
\end{equation}
Since $L$ is non-degenerate $\int_X \cc_1(L)^g$ is nonzero.  As a consequence, the right hand side of \eqref{HRRT} is nonzero for all $n \gg 0$.
\end{proof}

A special case of Theorem \ref{vanishing:thm} can be phrased in the language of naive $q$-ampleness.
More precisely, Totaro defined a line bundle $L$ on a projective variety $X$ to be \emph{naively $q$-ample} if, for all coherent sheaves $F$ on $X$, there exists an $n_0$ such that 
$\H^i(X,L^n\otimes F)=0 \text{,}$ for all $i>q$ and all $n\geq n_0$, see \cite[p. 731]{Tot}.

\begin{corollary}\label{q-ample}
A non-degenerate line bundle on an abelian variety is naively $q$-ample if and only if its index is less than or equal to $q$.
\end{corollary}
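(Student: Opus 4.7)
The plan is to prove both directions directly from the results already established, with essentially no new work required.

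For the forward direction, I assume $L$ is non-degenerate with $\ii(L) \leq q$ and show $L$ is naively $q$-ample. This is just Theorem \ref{vanishing:thm} applied with $Y = X$, $f = \id_X$, and $\alpha = \Osh_X$: given any coherent sheaf $F$ on $X$, there exists $n_0$ such that $\H^j(X, F \otimes L^n) = 0$ for every $j > \ii(L)$ and every $n \geq n_0$. Since $\ii(L) \leq q$, this gives $\H^j(X, F \otimes L^n) = 0$ for all $j > q$ and all $n \geq n_0$, which is precisely Totaro's definition of naive $q$-ampleness.

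For the converse, I argue by contraposition: if $L$ is non-degenerate with $\ii(L) > q$, then $L$ is not naively $q$-ample. The obvious test sheaf is $F = \Osh_X$. By Corollary \ref{vb:cor}, applied with $E = \Osh_X$ and $\alpha = \Osh_X$, there exists $n_0$ such that for every $n \geq n_0$ the line bundle $L^n$ satisfies the index condition with $\ii(L^n) = \ii(L)$, and in particular $\H^{\ii(L)}(X, L^n) \neq 0$. Since $\ii(L) > q$, this exhibits a coherent sheaf (namely $\Osh_X$) and a cohomological degree exceeding $q$ for which the vanishing required by naive $q$-ampleness fails, no matter how large $n_0$ is taken.

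Combining the two directions yields the equivalence. There is no real obstacle here: Theorem \ref{vanishing:thm} and Corollary \ref{vb:cor} have already done the substantive work, respectively providing the vanishing above $\ii(L)$ and the non-vanishing in degree $\ii(L)$ itself. The only care needed is to note that the converse uses the structural sheaf $\Osh_X$ as a witness, which is why the non-degeneracy hypothesis on $L$ is essential in order to invoke Corollary \ref{vb:cor}.
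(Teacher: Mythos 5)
Your proposal is correct and follows essentially the same route as the paper: the ``if'' direction is Theorem \ref{vanishing:thm} with $Y=X$ and $f=\id$, and the ``only if'' direction tests naive $q$-ampleness against $\Osh_X$ and uses the nonvanishing of $\H^{\ii(L)}(X,L^n)$ together with $\ii(L)=\ii(L^n)$. The paper cites Mumford's index theorem directly for that nonvanishing where you invoke Corollary \ref{vb:cor}, but this is only a cosmetic difference.
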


\begin{proof}
Let $L$ be a non-degenerate naively $q$-ample line bundle.  Since $L$ is naively $q$-ample, we conclude that $\H^j(X,L^n) = 0$ for $j>q$ and all sufficiently positive integers $n$.  Consequently, $\ii(L^n) \leq q$.  Since $\ii(L) = \ii(L^n)$, we conclude that $\ii(L) \leq q$.  The converse is an immediate consequence of Theorem \ref{vanishing:thm}, taking $Y=X$ and $f$ the identity map.   
\end{proof}

\noindent
{\bf Remark.}
If $X$ is a simple abelian variety, then every degenerate line bundle, that is a line bundle $L$ for which $\chi(L) = 0$, is an element of $\Pic^0(X)$.  We conclude that, for simple abelian varieties, if $q < g$ then every naively $q$-ample line bundle is non-degenerate and has index less than or equal to $q$.  Thus, on a simple abelian variety $X$, a line bundle is naively $q$-ample (with $q < g$) if and only if it is non-degenerate and has index less than or equal to $q$.


\subsection{Proof of Theorem \ref{Step2}}\label{Step2:proof}
Theorem \ref{Step2} is a consequence of the results of \S \ref{section:vanishing:theorem} and the following proposition.

\begin{proposition}\label{Step2:prop}
Let $L$ be a non-degenerate line bundle on $X$.  If $E$ and $F$ are vector bundles on $X$, then there exists an $n_0$ such that the vector bundle $T^*_x(E) \otimes L^n \otimes \alpha \otimes F$ satisfies the index condition and has index equal to that of $L$,  for all $x \in X$, for all $n \geq n_0$, and all $\alpha \in \Pic^0(X)$.
\end{proposition}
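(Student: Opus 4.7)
\noindent\emph{Proof proposal.} The plan is to mimic the proof of Theorem \ref{vanishing:thm}, the novelty being uniformity over the translation parameter $x \in X$. My key idea is that translation invariance of cohomology converts uniformity in $x$ into uniformity in a $\Pic^0(X)$-twist, which is already supplied by Corollary \ref{vb:cor}. Concretely, I would fix a globally generated ample line bundle $A$ on $X$ and apply Proposition \ref{m:reg:prop} to the coherent sheaf $F$ to obtain integers $p,m$ and an exact complex
$$0 \to F_{g+1} \to \cdots \to F_0 \to F \to 0$$
with each $F_\ell$ a finite direct sum of copies of $A^{-p\ell - m}$. Tensoring by the locally free sheaf $T^*_x E$ preserves exactness and produces, for every $x \in X$, a resolution of $T^*_x E \otimes F$ whose $\ell$th term is a sum of copies of $T^*_x E \otimes A^{-p\ell - m}$.

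For $j > \ii(L)$, I would apply Lemma \ref{ss:lemma} with $L^n \otimes \alpha$ playing the role of the vector bundle there and the tensored resolution playing the role of $\mathcal{F}$; this requires
$$\H^{\ell + j}(X,\, T^*_x E \otimes A^{-p\ell - m} \otimes L^n \otimes \alpha) = 0 \quad \text{for } 0 \leq \ell \leq g - j.$$
Pulling back by the automorphism $T_{-x}$ of $X$, and using that $T_{-x}^*$ fixes every element of $\Pic^0(X)$ while moving $A^{-p\ell - m}$ and $L^n$ by elements of $\Pic^0(X)$, this cohomology group is isomorphic to $\H^{\ell + j}(X, E \otimes A^{-p\ell - m} \otimes L^n \otimes \alpha')$ for some $\alpha' \in \Pic^0(X)$ depending on $(x, n, \ell, \alpha)$. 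Applying Corollary \ref{vb:cor} to each of the finitely many vector bundles $E \otimes A^{-p\ell - m}$, for $0 \leq \ell \leq g + 1$, and taking the maximum of the resulting integers supplies a single $n_0^{\mathrm{up}}$ such that, for all $n \geq n_0^{\mathrm{up}}$ and all $\alpha' \in \Pic^0(X)$, these cohomology groups vanish outside degree $\ii(L)$. Since $\ell + j > \ii(L)$ here, Lemma \ref{ss:lemma} yields the desired upper vanishing, uniformly in $x$ and $\alpha$.

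For $j < \ii(L)$, I would invoke Serre duality on $X$ (using $\omega_X \cong \mathcal{O}_X$) to identify $\H^j(X, T^*_x E \otimes L^n \otimes F \otimes \alpha)$ with the dual of $\H^{g - j}(X, T^*_x E^\vee \otimes L^{-n} \otimes F^\vee \otimes \alpha^{-1})$, and rerun the preceding argument with the data $(L^{-1}, E^\vee, F^\vee)$, using $\ii(L^{-1}) = g - \ii(L)$ so that $g - j > \ii(L^{-1})$ whenever $j < \ii(L)$; this produces an analogous threshold $n_0^{\mathrm{low}}$. Finally, Hirzebruch-Riemann-Roch together with translation and $\Pic^0(X)$ invariance of numerical classes shows $\chi(T^*_x E \otimes L^n \otimes F \otimes \alpha) = \chi(E \otimes F \otimes L^n)$, a polynomial in $n$ of leading coefficient $\rank(E) \rank(F) \chi(L) \neq 0$, so $\chi \neq 0$ for $n$ large; combined with the vanishing in all other degrees this forces $\H^{\ii(L)} \neq 0$, and the index condition holds with index $\ii(L)$. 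The main obstacle I anticipate is the first step of absorbing the translation parameter into a $\Pic^0$-twist; once that reformulation is in hand, the remainder is an essentially routine application of the Mumford-regularity and spectral-sequence machinery of \S\ref{section:vanishing:theorem}.
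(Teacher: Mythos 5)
Your proposal is correct and follows essentially the same route as the paper's proof: a Mumford-regularity resolution fed into Lemma \ref{ss:lemma}, with the translation absorbed into a $\Pic^0(X)$-twist so that Corollary \ref{vb:cor} supplies vanishing uniformly in the twist, Serre duality handling the degrees below $\ii(L)$, and a Riemann--Roch/flat-family argument giving non-vanishing of $\H^{\ii(L)}$. The only immaterial difference is that you resolve $F$ and pull the whole cohomology group back by $T_{-x}$, whereas the paper resolves $E$ and translates the resolution itself, using $T^*_xA \cong A \otimes \beta$ with $\beta \in \Pic^0(X)$.
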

\begin{proof}
Let $q := \ii(L)$ and let $\alpha \in \Pic^0(X)$.  The vector bundles $m^* E \otimes p_2^*(L^n \otimes \alpha \otimes F)$, for $n \geq 1$, on $X \times X$ are flat over $X$, via $p_1$.  Also, if $x \in X$, then 
$$ m^*(E) \otimes p_2^*(L^n \otimes \alpha \otimes F) \mid_{\{x\} \times X} = T^*_x(E) \otimes L^n \otimes \alpha \otimes F \text{.}$$

Since the Euler characteristic of a flat family of sheaves over a connected base is constant,  using Corollary \ref{vb:cor} with $x = 0$, we deduce that there exists an $m_0 > 0$ with the property that for all $n\geq m_0$ and all $x\in X$, 
$$ \chi(T^*_x(E) \otimes L^n \otimes \alpha \otimes F) \not = 0 \text{.}$$ 

As a result, to prove Proposition \ref{Step2:prop}, it suffices to establish the existence of a $p_0$ such that if $i \not = q$,  $x\in X$, and $n \geq p_0$ then  $$\H^i(X,T^*_x (E) \otimes L^n \otimes \alpha \otimes F) = 0 \text{.}$$

Let $A$ be a globally generated ample line bundle on $X$,  $m := \reg_A(E)$, $m' := \reg_A(E^\vee)$, and $p := g+1$.  Using Corollary \ref{vb:cor}, we see that there exists an $n_0$ such that
\begin{equation}\label{Step2:proof:1}
\H^{\ell + j}(X, L^n \otimes \gamma \otimes F \otimes A^{-p\ell - m}) = 0 \text{,}
\end{equation}
for all $j > q$, for all $0 \leq \ell \leq g - j$, for all $\gamma \in \Pic^0(X)$, and all $n \geq n_0$.
Again, using Corollary \ref{vb:cor}, we see that there exists an $n_0 '$ such that
\begin{equation}\label{Step2:proof:2}
\H^{\ell + j}(X, L^{-n} \otimes \gamma^\vee \otimes F^\vee \otimes A^{- p\ell - m'}) = 0 \text{,}
\end{equation}
for all $j > g - q$, for all $0 \leq \ell \leq g - j$,  for all $\gamma \in \Pic^0(X)$, and all  $n \geq n_0'$.

Using Proposition \ref{m:reg:prop}, we obtain exact complexes 
\begin{equation}\label{complexC} 
0 \rightarrow T^*_x(E_{g+1}) \rightarrow \oplus T^*_x(A^{-gp-m})\rightarrow \dots \rightarrow \oplus T^*_x(A^{-m}) \rightarrow T^*_x(E) \rightarrow 0 
\end{equation}
 and
\begin{equation}\label{complexD} 
0 \rightarrow T^*_x(E^{'}_{g+1}) \rightarrow \oplus T^*_x(A^{-gp-m'})\rightarrow \dots \rightarrow \oplus T^*_x(A^{-m'}) \rightarrow T^*_x(E^\vee) \rightarrow 0 
\end{equation}
of sheaves on $X$.

Since $T^*_x A = A \otimes \beta$, where $\beta \in \Pic^0(X)$, using \eqref{Step2:proof:1}, \eqref{Step2:proof:2}, \eqref{complexC}, \eqref{complexD}, Serre duality, and Lemma \ref{ss:lemma}, we conclude that
$$ \H^j(X, T^*_x(E) \otimes L^n \otimes \alpha \otimes F) = 0\text{,}$$ 
for all $j \not = q$, for all $n \geq \max \{n_0, n_0'\}$, for all $\alpha \in \Pic^0(X)$, and all $x \in X$.
\end{proof}

\begin{proof}[Proof of Theorem \ref{Step2}]

If $x \in X$ and $n \geq 1$ then 
$$ T^*_x(L^n \otimes E) = L^n \otimes \alpha \otimes T^*_x(E) $$ and
$$ T^*_x(L^n \otimes E) \otimes M^n \otimes F = L^n \otimes \alpha \otimes T^*_x(E) \otimes M^n \otimes F \text{,}$$
where $\alpha$ is some element of $\Pic^0(X)$.

Thus, by repeated application of Proposition \ref{Step2:prop}, there exists an $n_0$ such that the vector bundles
$$T^*_x(L^n \otimes E) \text{, } T^*_x(L^n \otimes E) \otimes M^n \otimes F \text{, and } M^n \otimes F $$
satisfy the index condition and have index, respectively, 
$$ \ii(L) \text{,} \ii(L \otimes M) \text{, and } \ii(M) \text{,}$$
for all $x \in X$ and all $n \geq n_0$.   Theorem \ref{Step2} now follows because 
$$\ii(L \otimes M) = \ii(L) + \ii(M) \text{.}$$
\end{proof}

\subsection{Proof of Theorem \ref{MainTheorem}}\label{proof:main:theorem}
Proposition \ref{direct:cp:nonzero:surj} below, in conjunction with Corollary \ref{cor:Par:prop} (b), yields one proof of Theorem \ref{MainTheorem}.

\begin{proposition}\label{direct:cp:nonzero:surj}
Let $X$ be an abelian variety and let $E$ and $F$ be vector bundles on $X$. Let $(L,M)$ be a pair of line bundles on $X$ which satisfies the pair index condition.
Then, under this hypothesis, there exists a positive integer $n_0$ such that the cup-product map
\begin{equation}\label{asym:cp} \H^{\ii(L)}(X, T^*_x(L^n \otimes E)) \otimes \H^{\ii(M)}(X , M^n \otimes F) \xrightarrow{\cup} \H^{\ii(L\otimes M)}(X, T^*_x(L^n \otimes E)\otimes M^n \otimes F) 
\end{equation}
 is nonzero and surjective for all $n\geq n_0$ and all $x\in X$.  
\end{proposition}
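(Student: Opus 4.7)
The plan is to apply Proposition \ref{mainprop1} to identify the cup-product map \eqref{asym:cp} at $x$ with the fiberwise evaluation of the vector bundle
\[
\mathcal{E}_n \;:=\; \R^{\ii(L\otimes M)}_{p_1*}\bigl(m^*(L^n\otimes E)\otimes p_2^*(M^n\otimes F)\bigr)
\]
at $x$, and then to verify global generation of $\mathcal{E}_n$ for $n\gg 0$ by a cohomological argument built on Theorems \ref{vanishing:thm} and \ref{Step2}. First, using Theorem \ref{Step2}, I fix $n_0$ so that for all $n\geq n_0$ and $x\in X$ the pair $(L^n\otimes E, M^n\otimes F)$ satisfies the pair index condition with indices $\ii(L)$, $\ii(M)$, $\ii(L\otimes M)$; in particular the source and target of \eqref{asym:cp} are nonzero, so the nonzero-ness of the cup-product follows from its surjectivity. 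By Proposition \ref{mainProp0}, $\mathcal{E}_n$ is locally free, and by Proposition \ref{mainprop1} the image of \eqref{asym:cp} at $x$ coincides with the image of the evaluation map $\H^0(X,\mathcal{E}_n)\otimes\kappa(x)\to \mathcal{E}_n|_x$; hence surjectivity at every $x$ is equivalent to global generation of $\mathcal{E}_n$.

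Next, I would verify the Mukai IT$_0$ property, namely $\H^p(X,\mathcal{E}_n\otimes\alpha)=0$ for all $p>0$, $\alpha\in\Pic^0(X)$, and $n\geq n_0$. Using the automorphism $\sigma\colon X\times X\to X\times X$ given by $\sigma(u,v)=(u+v,v)$ (which satisfies $p_1\circ\sigma = m$ and $p_2\circ\sigma = p_2$), together with the theorem of the square $m^*\alpha\cong p_1^*\alpha\otimes p_2^*\alpha$ on $\Pic^0(X)$, one obtains the isomorphism
\[
m^*(L^n\otimes E)\otimes p_2^*(M^n\otimes F)\otimes p_1^*\alpha \;\cong\; \sigma^*\bigl(p_1^*(L^n\otimes E\otimes\alpha)\otimes p_2^*(M^n\otimes F\otimes\alpha^{-1})\bigr).
\]
The Künneth formula, the Leray spectral sequence for $p_1$, and the uniform vanishings provided by Corollary \ref{vb:cor} applied to $L$ and $M$ with their $\alpha$-twists then yield the required IT$_0$ vanishings for $n\geq n_0$, with bounds uniform in $\alpha$.

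Finally, to upgrade IT$_0$ to global generation, I would use the exact sequence $0\to \mathcal{E}_n\otimes\mathcal{I}_x\to \mathcal{E}_n\to \mathcal{E}_n|_x\to 0$, which together with $\H^1(X,\mathcal{E}_n)=0$ reduces surjectivity of the evaluation at $x$ to the single vanishing $\H^1(X,\mathcal{E}_n\otimes\mathcal{I}_x)=0$. I expect this to be the main obstacle, as it must hold uniformly in $x\in X$. My plan is to compute the cohomology via the Leray spectral sequence for $p_2$ applied to $m^*(L^n\otimes E)\otimes p_2^*(M^n\otimes F)\otimes p_1^*\mathcal{I}_x$ on $X\times X$, whose fiberwise inputs are of the form $\H^j(X, T^*_v(L^n\otimes E)\otimes\mathcal{I}_x)\otimes (M^n\otimes F)|_v$, and then to invoke Theorem \ref{vanishing:thm} applied to the coherent sheaf $T^*_v E\otimes\mathcal{I}_x$ with the line bundle $L$. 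The critical technical point is that the vanishing bounds in Theorem \ref{vanishing:thm} are uniform in $\alpha\in\Pic^0(X)$ and under arbitrary isogenies, and a careful flat-family argument (in the spirit of the proof of Theorem \ref{Step2}) should promote this to uniformity in $v$ and $x$.
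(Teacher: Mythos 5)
Your reductions match the paper's: Theorem \ref{Step2} to secure the pair index condition, Proposition \ref{mainprop1} (equivalently Corollary \ref{cor:Par:prop}(b)) to translate surjectivity at every $x$ into global generation of $\mathcal{E}_n$, and the exact sequence $0\to\mathcal{E}_n\otimes\mathcal{I}_x\to\mathcal{E}_n\to\mathcal{E}_n|_x\to 0$ together with $\H^1(X,\mathcal{E}_n)=0$ to reduce everything to the single vanishing $\H^1(X,\mathcal{E}_n\otimes\mathcal{I}_x)=0$, i.e.\ $\H^{q+1}(X\times X,\mathcal{N}_n\otimes p_1^*\mathcal{I}_x)=0$ with $q=\ii(L\otimes M)$. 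Your IT$_0$ verification is also correct. But the mechanism you propose for the decisive vanishing has a genuine gap. The Leray spectral sequence for $p_2$ together with Theorem \ref{vanishing:thm} applied fiberwise to $T^*_vE\otimes\mathcal{I}_x$ with the line bundle $L$ only kills $\R^j_{p_2*}$ for $j>\ii(L)$; it says nothing about the rows $j\leq\ii(L)$, and those contribute to total degree $q+1$. Concretely, when $\ii(L)\geq 2$ the sequence $0\to\mathcal{N}_n\otimes p_1^*\mathcal{I}_x\to\mathcal{N}_n\to\mathcal{N}_n|_{\{x\}\times X}\to 0$ gives
\begin{equation*}
\R^1_{p_2*}(\mathcal{N}_n\otimes p_1^*\mathcal{I}_x)\;\cong\;p_{2*}\bigl(\mathcal{N}_n|_{\{x\}\times X}\bigr)\;\cong\;T^*_x(L^n\otimes E)\otimes M^n\otimes F,
\end{equation*}
so $\E_2^{q,1}\cong\H^{q}(X,T^*_x(L^n\otimes E)\otimes M^n\otimes F)$, which is nonzero --- it is the target of the cup-product map. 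The only way this term dies by the $\E_\infty$ page is via the incoming differential from $\E^{\ii(M),\ii(L)}$, whose surjectivity is essentially the surjectivity of the cup product you are trying to prove; the argument is circular.

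The fix is to apply the positivity globally on $X\times X$ rather than fiberwise with $L$ alone, and this is what the paper does. It observes that surjectivity of \eqref{asym:cp} follows from $\H^{q+1}(X\times X,\Ish_\Delta\otimes T^*_x(L^n\otimes E)\boxtimes(M^n\otimes F))=0$ (restriction to the diagonal realizes the cup product via K\"unneth), resolves $\Ish_\Delta$ once and for all by direct sums of $B^{-p\ell-m}$ with $B=A\boxtimes A$ (Proposition \ref{m:reg:prop}), and then applies Theorem \ref{vanishing:thm} on $X\times X$ to the non-degenerate line bundle $L\boxtimes M$ of index $q$: each term $\H^{q+1+\ell}$ of the twisted resolution vanishes for $n\gg0$, and Lemma \ref{ss:lemma} concludes. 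Note also that your worry about uniformity in $x$ and $v$ evaporates in this setup: $(T_x\times\id)^*$ changes the relevant line bundles only by elements of $\Pic^0(X\times X)$, and Theorem \ref{vanishing:thm} is already uniform over $\Pic^0$, so no flat-family or semicontinuity argument is needed. Alternatively, you were one step away from the paper's second proof: instead of IT$_0$ for $\mathcal{E}_n$, verify the index condition for $p_1^*(A^{-1}\otimes\alpha)\otimes m^*(L^n\otimes E)\otimes p_2^*(M^n\otimes F)$ for all $\alpha\in\Pic^0(X)$ (Corollary \ref{vb:cor} on $X\times X$ gives this for $n\gg0$), and then invoke the Pareschi--Popa $M$-regularity criterion (Proposition \ref{Par:Pop:glob:gen}) to get global generation; the extra twist by $A^{-1}$ is exactly what upgrades the cohomological vanishing to global generation.
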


\begin{proof}
Let $q := \ii(L\otimes M)$.  Throughout the proof, we fix a globally generated ample line bundle 
$A$ on $X$.  

We first show that the cup-product maps  are surjective for all sufficiently large $n$ not depending on the points of $X$.  Let $\Ish_\Delta$ denote the ideal sheaf of the diagonal $\Delta \subseteq X \times X$.
It is enough to exhibit a positive integer $n_0$ such that 
$$\H^{q+1}(X\times X, \Ish_\Delta \otimes T^*_x(L^n \otimes E) \boxtimes (M^n \otimes F)) =0$$ for all $n\geq n_0$ and all $x\in X$.

Set $B := A\boxtimes A$, $m := \reg_B(\Ish_\Delta)$, and $p := 2g + 1$.  
Since $L \boxtimes M$ is non-degenerate and has index $q$ there exists, by Theorem \ref{vanishing:thm}, an $n_0 > 0$ such that
\begin{equation}\label{direct:cp:nonzero:surj:1}
\H^{q + 1 + \ell}(X \times X, (T_x \times \id_X)^*(E \boxtimes F \otimes B^{-p\ell-m}\otimes L^n \boxtimes M^n) \otimes \alpha) = 0 \text{,}
\end{equation} 
for all $\alpha \in \Pic^0(X \times X)$, for all $x \in X$, for all $0 \leq \ell \leq 2g - q - 1$, and all $n \geq n_0$. 

Fix such an $n_0$, let $n \geq n_0$, and let $x \in X$.  Observe now that
\begin{equation}\label{direct:cp:nonzero:surj:2}
B^{-p \ell -m} \otimes T^*_x(L^n \otimes E) \boxtimes (M^n \otimes F) \cong (T_x \times \id_X)^*(E \boxtimes F \otimes B^{-p \ell -m} \otimes L^n \boxtimes M^n) \otimes \beta \text{,}
\end{equation}
where $\beta$ is some element of $\Pic^0(X \times X)$.

By Proposition \ref{m:reg:prop}, there exists an exact complex of sheaves 
\begin{equation}\label{direct:cp:nonzero:surj:3}
0 \rightarrow I_{2g + 1} \rightarrow \oplus B^{-2gp - m} \rightarrow \dots \rightarrow \oplus B^{-m} \rightarrow \Ish_\Delta \rightarrow 0 
\end{equation} on $X \times X$.
Tensoring \eqref{direct:cp:nonzero:surj:3} with $T^*_x(L^n \otimes E) \boxtimes (M^n \otimes F)$ and using \eqref{direct:cp:nonzero:surj:2}, \eqref{direct:cp:nonzero:surj:1}, and Lemma \ref{ss:lemma}, we conclude that
$$\H^j(X \times X, \Ish_\Delta \otimes T^*_x(L^n \otimes E) \boxtimes (M^n \otimes F)) = 0 \text{,} $$ for all $j > q$, for all $n \geq n_0$, and all $x \in X$.

To see that the target space of the cup-product maps are nonzero, for all sufficiently large $n$ not depending on the points of $X$, note that, by Proposition \ref{Step2:prop}, there exists an $n_0 > 0$ such that 
\begin{equation}\label{direct:cp:nonzero:surj:4}
\H^q(X, T^*_x(E\otimes L^n) \otimes (F \otimes M^n)) \not = 0 \text{,}\end{equation}
for all $x\in X$ and all $n\geq n_0$. 

Since the cup-product maps are surjective for all sufficiently large $n$, not depending on the points of $X$, we conclude, using \eqref{direct:cp:nonzero:surj:4}, that they are nonzero and surjective for all sufficiently large $n$ not depending on the points of $X$.
\end{proof}

\begin{proof}[Proof of Theorem \ref{MainTheorem}]
By Proposition \ref{direct:cp:nonzero:surj}, there exists a positive integer $n_0$ with the property that the cup-product maps $\cup(T^*_x(L^n \otimes E), M^n \otimes F)$ are nonzero and surjective for all $x \in X$ and all  $n \geq n_0$.   By Corollary \ref{cor:Par:prop} (b), this is equivalent to the vector bundle 
$$\R^{\ii(L \otimes M)}_{p_1 * } (m^*(L^n \otimes E) \otimes p_2^*(M^n \otimes F))$$
being globally generated for all $n \geq n_0$.
\end{proof}

\subsection{Theorem \ref{MainTheorem} and the work of Pareschi-Popa}\label{main:theorem:proof2}
Let $(E,F)$ be a pair of vector bundles on $X$ which satisfies the pair index condition.  We use Pareschi-Popa's theory of $M$-regularity, as defined in \cite{Par:Popa} and \cite{Par:Popa:II}, to obtain a sufficient condition for the vector bundle $\R^{\ii(E \otimes F)}_{p_1*}(m^* E \otimes p_2^* F)$ to be globally generated; see Proposition \ref{Par:Pop:glob:gen}.  

As a consequence, by Corollary \ref{cor:Par:prop} (b), we also obtain a sufficient condition for the cup-product maps $\cup(T^*_x E, F)$, for $x \in X$, to be nonzero and surjective. Combining these results with Theorem \ref{Step2} and Corollary \ref{vb:cor}, we obtain a second proof of Theorem \ref{MainTheorem}. 

\begin{proposition}\label{Par:Pop:glob:gen}
Let $(E,F)$ be a pair of vector bundles on $X$ which satisfies the pair index condition.  Let $A$ be an ample line bundle on $X$. 
If 
$$p_1^*(A^{-1} \otimes \alpha) \otimes m^* E \otimes p_2^* F$$
satisfies the index condition and has index $\ii(E \otimes F)$, for all $\alpha \in \Pic^0(X)$, then 
$$\R^{\ii(E \otimes F)}_{p_1*}(m^* E \otimes p_2^* F)$$ is globally generated and the cup-product maps $\cup(T^*_x E, F)$ are surjective for all $x \in X$.
\end{proposition}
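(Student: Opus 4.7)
The plan is to translate the cohomological hypothesis on $X \times X$ into an IT$_0$-type vanishing statement on $X$ via the Leray spectral sequence for $p_1$, and then to invoke Pareschi's global generation criterion \cite[Theorem 2.1]{Par} applied to the vector bundle
$$\mathcal{E} := \R^{\ii(E \otimes F)}_{p_1*}(m^*E \otimes p_2^* F).$$
Once $\mathcal{E}$ is known to be globally generated, the surjectivity of the cup-product maps $\cup(T^*_x E, F)$ at every point $x \in X$ follows immediately from Corollary \ref{cor:Par:prop} (b).

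Set $\mathcal{N} := m^*E \otimes p_2^* F$ and fix $\alpha \in \Pic^0(X)$. First I would compute the higher direct images of $p_1^*(A^{-1} \otimes \alpha) \otimes \mathcal{N}$ under $p_1$. By the projection formula these are $(A^{-1}\otimes \alpha) \otimes \R^q_{p_1*}\mathcal{N}$, and Proposition \ref{mainProp0} shows that the latter vanishes for $q \neq \ii(E \otimes F)$ and equals $\mathcal{E}$ when $q = \ii(E \otimes F)$. Consequently the Leray spectral sequence is concentrated on a single row, degenerates at $\E_2$, and produces isomorphisms
$$\H^p(X, \mathcal{E} \otimes A^{-1} \otimes \alpha) \cong \H^{p + \ii(E \otimes F)}(X \times X, p_1^*(A^{-1} \otimes \alpha) \otimes m^*E \otimes p_2^* F).$$

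Invoking the hypothesis that $p_1^*(A^{-1} \otimes \alpha) \otimes m^*E \otimes p_2^* F$ satisfies the index condition with index $\ii(E \otimes F)$, the right-hand side above vanishes whenever $p + \ii(E \otimes F) \neq \ii(E \otimes F)$, i.e. whenever $p > 0$. Hence $\H^p(X, \mathcal{E} \otimes A^{-1} \otimes \alpha) = 0$ for all $p > 0$ and all $\alpha \in \Pic^0(X)$, which is precisely the IT condition with index $0$ for $\mathcal{E} \otimes A^{-1}$. Applying Pareschi's criterion then yields that $\mathcal{E}$ is globally generated, and Corollary \ref{cor:Par:prop} (b) completes the argument.

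I do not anticipate a significant obstacle: the proof is essentially a routine unwinding of the hypothesis through the projection formula, base change (already carried out inside Proposition \ref{mainProp0}), and Leray, with the Pareschi global generation criterion doing the essential work at the end. The only mildly delicate point is to ensure that the cited version of \cite[Theorem 2.1]{Par} is available with $A$ merely ample, and to keep careful track of which twist lives on $X$ versus $X \times X$ when applying the projection formula; both of these are bookkeeping rather than genuine obstacles.
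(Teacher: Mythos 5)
Your proposal is correct and follows essentially the same route as the paper's proof: the projection formula combined with Proposition \ref{mainProp0} collapses the Leray spectral sequence for $p_1$ to a single row, the hypothesis then gives the vanishing of $\H^i(X,\mathcal{E}\otimes A^{-1}\otimes\alpha)$ for $i>0$ and all $\alpha\in\Pic^0(X)$, and Pareschi's criterion (\cite[Theorem 2.1, p.~654]{Par}, or equivalently \cite[Theorem 2.4, p.~289]{Par:Popa}) yields global generation, with Corollary \ref{cor:Par:prop} (b) finishing the argument.
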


\begin{proof}
Let $\mathcal{N} := m^* E \otimes p_2^* F$ and let $\mathcal{E} := \R^{\ii(E \otimes F)}_{p_1 *}(\mathcal{N})$.  By 
 \cite[Theorem 2.1, p. 654]{Par} or \cite[Theorem 2.4, p. 289]{Par:Popa}  (see also \cite[Theorem 14.5.2]{BL}) to prove that $\mathcal{E}$ is globally generated it suffices to prove that
$$ \H^i(X,\mathcal{E} \otimes A^{-1} \otimes \alpha) = 0\text{,} $$ for all $i > 0$, and all $\alpha \in \Pic^0(X)$.

Let $\alpha \in \Pic^0(X)$.  The push-pull formula 
implies that 
$$\R^j _{p_{1*}}(p_1^*(A^{-1} \otimes \alpha) \otimes \mathcal{N}) = A^{-1} \otimes \alpha \otimes \R^{j}_{p_1 *}(\mathcal{N})\text{,}$$ for all $j$.

Hence, we have
$$ \R^j _{p_{1*}}(p_1^*(A^{-1} \otimes \alpha) \otimes \mathcal{N}) = \begin{cases}
A^{-1} \otimes \alpha \otimes \mathcal{E} & \text{ when $j = \ii(E \otimes F)$} \\
0 & \text{ when $j \not = \ii(E \otimes F)$.}
\end{cases}$$
Using the Leray spectral sequence we obtain
\begin{equation}\label{eqn:suff} \H^{\ell - \ii(E \otimes F)}(X, \mathcal{E} \otimes \alpha \otimes A^{-1}) = \H^\ell(X\times X, p_1^*(A^{-1} \otimes \alpha) \otimes \mathcal{N}) \text{.} 
\end{equation}
By assumption the right hand side of \eqref{eqn:suff} is zero except when $\ell = \ii(E \otimes F)$ so the higher cohomology groups of $\mathcal{E} \otimes \alpha \otimes A^{-1}$ vanish as desired.
\end{proof}

\begin{corollary}
Let $(L,M)$ be a pair of line bundles on $X$ which satisfies the pair index condition.  If $A$ is an ample line bundle on $X$, and if $p_{1}^*A^{-1} \otimes m^* L \otimes p_2^*M$ is a non-degenerate line bundle on $X \times X$ with index  $\ii(L \otimes M)$, then 
$$\R^{\ii(L\otimes M)}_{p_1*}(m^* L \otimes p_2^* M)$$
is globally generated and the cup-product maps $\cup(T^*_x L , M)$ are surjective for all $x \in X$.
\end{corollary}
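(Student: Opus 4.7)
My plan is to derive the corollary as a direct specialization of Proposition \ref{Par:Pop:glob:gen} to the case $E = L$ and $F = M$. The only thing that requires checking is that the hypothesis of Proposition \ref{Par:Pop:glob:gen} is in force, namely that for every $\alpha \in \Pic^0(X)$, the line bundle
\[
p_1^*(A^{-1} \otimes \alpha) \otimes m^* L \otimes p_2^* M
\]
on $X \times X$ satisfies the index condition and has index equal to $\ii(L \otimes M)$. The statement of the corollary only gives this at $\alpha = \Osh_X$, so the task is to propagate that one instance to all of $\Pic^0(X)$.

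The key observation is that $p_1^* \colon \Pic^0(X) \to \Pic^0(X \times X)$ lands in $\Pic^0$, so $p_1^*\alpha$ is algebraically (hence numerically) trivial on $X \times X$. Consequently
\[
p_1^*(A^{-1} \otimes \alpha) \otimes m^* L \otimes p_2^* M
\;\equiv\; p_1^* A^{-1} \otimes m^* L \otimes p_2^* M
\]
in $\N^1(X \times X)$. By the discussion in \S\ref{index:section} (Mumford's index theorem, together with the fact that numerically equivalent line bundles have equal Euler characteristic and equal index), the left-hand side is therefore non-degenerate with index exactly $\ii(L \otimes M)$, because this is assumed for the right-hand side. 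For a line bundle, the index condition is equivalent to non-degeneracy (this is Mumford's cohomology vanishing recalled in the introduction), so the index condition also transfers. Thus the hypothesis of Proposition \ref{Par:Pop:glob:gen} holds for all $\alpha \in \Pic^0(X)$.

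Applying Proposition \ref{Par:Pop:glob:gen} with $E = L$ and $F = M$ now yields that $\R^{\ii(L \otimes M)}_{p_1*}(m^*L \otimes p_2^*M)$ is globally generated, and by Corollary \ref{cor:Par:prop} (b) this is equivalent to the surjectivity of the cup-product maps $\cup(T^*_x L, M)$ for all $x \in X$. No step here is delicate; the mild obstacle is simply recognizing that the $p_1^*\alpha$ twist of the hypothesis line bundle is numerically trivial and invoking the numerical invariance of the index function (Proposition \ref{constant:index} / Corollary \ref{index:cont:cor}) to reduce the parametrized hypothesis to the single one supplied.
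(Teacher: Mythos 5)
Your proposal is correct and follows essentially the same route as the paper: the paper's own proof likewise reduces the corollary to Proposition \ref{Par:Pop:glob:gen} by observing that $p_1^*\alpha \in \Pic^0(X \times X)$, so that twisting by it preserves non-degeneracy and the index, and hence the single hypothesis at $\alpha = \Osh_X$ propagates to all of $\Pic^0(X)$. Your additional remark that non-degeneracy of a line bundle is equivalent to the index condition (via Mumford's vanishing theorem) correctly fills in the one detail the paper leaves implicit.
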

\begin{proof}
If $p_{1}^* A^{-1} \otimes m^* L \otimes p_2^*M$ is non-degenerate and if $\ii(A^{-1} \otimes p_1^* L \otimes p_2^*M) = \ii(L \otimes M)$ then the same is true for $p_{1}^* (A^{-1} \otimes \alpha) \otimes m^* L \otimes p_2^*M$, for all $\alpha \in \Pic^0(X)$, because $p_1^*(\alpha)$ is an element of $\Pic^0(X \times X)$.
\end{proof}


\begin{proof}[Second proof of Theorem \ref{MainTheorem}]
By Theorem \ref{Step2}, there exists an $n_0$ with the property that the pair $(L^n \otimes E, F^n \otimes F)$ satisfies the pair index condition and  $$\ii(T^*_x(L^n \otimes E) \otimes F^n \otimes F) = \ii(L \otimes M) \text{,}$$ for all $x \in X$,  and all $n \geq n_0$.

Let $A$ be an ample line bundle on $X$.  Since $m^*L \otimes p_2^* M$ is a non-degenerate line bundle with index $\ii(L \otimes M)$ on $X \times X$, by Corollary \ref{vb:cor} and increasing $n_0$ if necessary, we conclude that the vector bundles 
$$p_1^*(A^{-1} \otimes \alpha) \otimes m^*(L^n \otimes E) \otimes p_2^*(M^n \otimes F)\text{,}$$ for all $n \geq n_0$ and all $\alpha \in \Pic^0(X)$, satisfy the index condition and have index $\ii(L\otimes M)$. 

Using Proposition \ref{Par:Pop:glob:gen}, we conclude that $\R^{\ii(L \otimes M)}_{p_1 * } (m^*(L^n \otimes E) \otimes (M^n \otimes F))$ is globally generated for all $n \geq n_0$.  By Corollary \ref{cor:Par:prop} (b), this is equivalent to the condition that the cup-product maps $\cup(T^*_x(L^n \otimes E), M^n \otimes F)$ are nonzero and surjective for all $x \in X$ and all $n \geq n_0$.  
\end{proof}

\section{Examples }\label{ab-var-examples}

\subsection{Cup-product maps are not always nonzero}\label{main:example} 
Let $E$ be an elliptic curve and let $X$ denote the product $E\times E$.  
We prove that $X$ admits a pair of line bundles $(L,M)$ which satisfies the pair index condition and a curve $C \subseteq X$ for which the cup-product map $\cup(T^*_x L ,M)$ is zero, for all $x \in C$, and nonzero for all $x \not \in C$.  On the other hand, note that a special case of Theorem \ref{MainTheorem} is that, in contrast to this phenomena, after scaling things behave more uniformly.

\subsubsection{The Neron-Severi space of $E \times E$}\label{NS:EE}
Let $x\in E$, let $f_1$ denote the numerical class of the  divisor $\{x\}\times E$, and let $f_2$ denote the numerical class of the divisor $E\times \{x\}$.  Finally, let $\Delta$ denote the numerical class of the diagonal, and let $\gamma$ denote the numerical class $f_1+f_2 - \Delta$.

Then $\dim_\RR \operatorname{N}^1(X)_{\RR} \geq 3$ and the classes $f_1, f_2$ and $\gamma$ span a three dimensional subspace. The intersection table and the subspace of  $\N^1(X)_\RR$ associated to the classes $f_1$, $f_2$ and $\gamma$ is pictured below:  
\begin{center}
\begin{pspicture}(6,1.5)(-5.5,-1.5) 
\parametricplot{0}{180}{t cos 0.8 mul t sin 0.25 mul 1.20 add}
\parametricplot{25}{155}{t cos 0.8 mul t sin -0.25 mul 1.20 add}
\parametricplot{0}{180}{t cos 0.8 mul t sin -0.25 mul 1.20 sub}
\psline(-0.8,1.2)(0.8,-1.2)
\psline(-0.8,-1.2)(0.8,1.2)
\rput(0.05,0.77){\tiny\pstilt{50}{\color{black}$\H^{0}$}}
\rput(0.05,-0.77){\tiny\pstilt{50}{\color{black}$\H^{2}$}}
\rput(0.65,0){\tiny\color{black}$\H^1$}
\rput(-1.6,-0.3){\tiny\color{black}\psframebox[linecolor=black,framesep=1pt]{$ab-c^2=0$}}
\rput(-0.5,-0.33){\small\color{black}$\rightsquigarrow$}
\rput(5.5,1.6){\begin{scriptsize} If $\chi(L)\not = 0$ and  \end{scriptsize}}
\rput(5.5,1.1){\begin{scriptsize} $L$ has numerical class $af_1+bf_2+c\gamma$ then \end{scriptsize}}
\rput(5,0){\begin{scriptsize} $\ii (L)= \begin{cases}  0 \textrm{ iff } ab-c^2>0 \textrm{ and } a+b>0 \\ 
                                                            1 \textrm{ iff } ab-c^2<0 \\ 
                                                            2 \textrm { iff } ab-c^2>0 \textrm{ and } a+b<0.
                                                            \end{cases}$ \end{scriptsize}}
\rput(-5.5,1.2){\begin{scriptsize}  The intersection relations.  \end{scriptsize} }
\rput(-5.5,0){\begin{scriptsize} $\begin{array}{c|c|c|c} \cdot  & f_1 & f_2 & \gamma \\  \hline
			f_1  & 0 & 1 & 0 \\ \hline
			f_2  &  1 & 0 & 0 \\  \hline
			\gamma  & 0 & 0 & -2 \\ 
\end{array} $
  \end{scriptsize}}                                                                                                                     
\end{pspicture}
\end{center}

See also \cite[Ex. V.1.6, p. 367]{Hart}.

\subsubsection{Cup-product maps on $E \times E$}\label{first:examples}
Using \S \ref{NS:EE}, we see that the numerical classes 
$$\text{$-mf_1+\gamma$, for $m\geq 3$, and $f_1-f_2$} $$ determine cup-product problems 
$$\cup(L,M):\H^1(X,L)\otimes \H^1(X,M)\rightarrow \H^2(X,L\otimes M) $$ for all $L$ with numerical class $-mf_1+\gamma$ and all $M$ with numerical class $f_1-f_2$.  

If, for $n\geq 1$, the multiplication map $\cup(L^n,M^n)$ is surjective then
$$h^1(X,L^n)h^1(X,M^n) \geq h^2(X,L^n\otimes M^n). $$ Observe that $h^1(X,L^n)=n^2$, $h^1(X,M^n)=n^2$, and $h^2(L^n\otimes M^n)=n^2(m-2)$.   Hence, if $\cup(L^n,M^n)$ is surjective then $n \geq \sqrt{m-2}$.  In particular, $\cup(L,M)$ is not surjective for $m\geq 4$.  We discuss the boundary case $m=3$ in \S \ref{zero:map}.
 
\subsubsection{Nontrivial cup-product maps can result in the zero map}\label{zero:map}
Fix line bundles $L$ and $M$ on $X$ with numerical classes $-3f_1 + \gamma$ and $f_1 - f_2$, respectively, and consider the family of cup-product maps
\begin{equation}\label{main:family}
 \cup(T^*_x L,M): \H^1(X,T^*_xL)\otimes \H^1(X,M)\rightarrow \H^2(X,T^*_xL\otimes M)
 \end{equation}
parametrized by points of $X$.  Since the source and target space of these maps are $1$-dimensional vector spaces each map is either zero or surjective.

To determine the nature of these maps recall that, by Proposition \ref{mainprop1}, the image of $\cup(T_x^*L,M)$, $x \in X$, coincides with the image of the  evaluation map
\begin{equation}\label{fiberwise:example}
\H^0(X,\R^2_{p_{1}*}(m^* L \otimes p_{2}^*M))\otimes \kappa(x)\rightarrow \R^2_{p_{1}*}(m^* L \otimes p_{2}^*M){\mid_x} \text{.} \end{equation} 

Also $\R^2_{p_{1}*}(m^* L \otimes p_{2}^*M)$ is a nontrivial line bundle and 
$$h^0(X, \R^2_{p_{1}*}(m^* L \otimes p_{2}^*M)) = 1\text{.}$$ (Apply Proposition \ref{mainProp0} and Corollary \ref{cor:Par:prop} (c) or Proposition \ref{mainProp0} and \eqref{chern:class} below.)  
As a result if $C$ is the base locus of $\R^2_{p_{1}*}(m^* L \otimes p_{2}^*M)$ then the evaluation map \eqref{fiberwise:example} is zero for all $x \in C$ and is nonzero for all $x \not \in C$.  Since the image of \eqref{fiberwise:example}, for a fixed $x \in X$, coincides with that of $\cup(T^*_x L, M)$ we conclude that $\cup(T^*_x L, M)$ is zero if $x \in C$ and nonzero if $x \not \in C$.  

\subsubsection{The first Chern class}\label{first:chern}
We can gain more precise information regarding the nature of the vector bundles considered in \S \ref{zero:map}.

Let $X$ be an abelian surface.  Let $\K(X)$ and $\K(X \times X)$ denote the Grothendieck group of coherent sheaves on $X$ and $X \times X$ respectively.  
Let $L$ and $M$ be line bundles on $X$.   Let $\eta$ denote the class of $m^* L \otimes p_2^* M$ in $\K(X \times X)$ and let 
$$\xi := p_{1 *}(\eta) \in \K(X) \text{.}$$  Let $\operatorname{ch}_1(\xi)$ denote the portion of $\operatorname{ch}(\xi)$ contained in $\Chow^1(X)_\QQ$, where 
$$\operatorname{ch} : \K(X) \rightarrow \Chow^*(X)_\QQ$$ is the Chern character homomorphism.  

We now prove that 
\begin{equation}\label{chern:class}
\operatorname{ch}_1(\xi) = \chi(M) \cc_1(L) + \chi(L)\cc_1(M) \in \Chow^1(X)_\QQ \text{.}
\end{equation}

By the Grothendieck-Riemann-Roch theorem, we have
\begin{equation}\label{grr1}
\operatorname{ch}_1(\xi) = \frac{1}{3 !} p_{1 *}(\cc_1(m^* L \otimes p_2^* M) ^ 3)\text{.} 
\end{equation}
On the other hand, expanding and noting that Chern classes commute with flat pull-back, we have
$$\cc_1(m^* L \otimes p_2^* M)^3 = 3 (m^* \cc_1(L)^2 p_1^* \cc_1(M) + m^* \cc_1(L) p_1^* \cc_1(M)^2 )\text{.}$$ Consequently,  
\begin{equation}\label{grr2}
p_{1*}(\cc_1(m^* L \otimes p_2^* M)^3) = 3\left( \left(\int_X \cc_1(L)^2 \right) \cc_1(M) + \left(\int_X \cc_1(M)^2 \right)\cc_1(L) \right)\text{.} 
\end{equation}
Combining \eqref{grr1} and \eqref{grr2} we conclude that \eqref{chern:class} holds.

\subsection{The classical case: theta groups and cup-product maps}\label{theta:example} 
We now describe an approach, used by Mumford, to study of cup-product problems arising from pairs of algebraically equivalent ample line bundles.   In \S \ref{theta:positive:index} we consider this technique as it applies to non-degenerate line bundles having nonzero index.

\subsubsection{Preliminaries}
Let $X$ be an abelian variety.  Every line bundle $L$ on $X$ determines a group homomorphism $\phi_L : X \rightarrow \Pic^0(X)$ defined by $x\mapsto T^*_xL\otimes L^{-1}$. See \cite[The theorem of the square, p. 57]{Mum} and \cite[\S 8, p. 70]{Mum}.

Let $\K(L) := \{ x \in X : T^*_x L \cong L \}$, and observe that $x\in \K(L)$ if and only if $x \in \ker \phi_L$.  Furthermore, if $L$ and $M$ are line bundles on $X$ and $x\in \K(L\otimes M)$, then $\phi_L(x)=\phi_M(x)^{-1}$.   

Let $\G(L\otimes M) := \{(x, \phi) : \phi : L\otimes M \xrightarrow{\cong} T^*_x (L\otimes M) \}$ denote the theta group of the line bundle $L\otimes M$, see \cite[p. 289]{MumI} and \cite[p. 64]{Mum:Quad:Eqns}. The group $\G(L\otimes M)$ acts on the cohomology groups $\H^i(X,L\otimes M)$.  Explicitly, we have  
$$(x, \phi) \cdot \sigma := \H^i(\phi^{-1})\circ \H^i(T^*_x)(\sigma) \text{, for $\sigma \in \H^i(X,L\otimes M)$ and $(x,\phi) \in \G(L \otimes M)$.}$$ See \cite[p. 66]{Mum:Quad:Eqns}.

\subsubsection{Cup-product maps, ample line bundles, and theta groups}\label{ample:theta}
Assume now that $X$ is defined over the complex numbers.
One approach to study cup-product maps
$$\cup(L^n,M^m) : \H^0(X,L^n) \otimes \H^0(X,M^m) \rightarrow \H^0(X, L^n \otimes M^m) $$
arising from pairs of algebraically equivalent ample line bundles $L$ and $M$ on $X$, is to consider the natural map 
\begin{equation}\label{sum-ample-cp-prod} \bigoplus_{x\in \K(L\otimes M)} \H^0(X,L\otimes \phi_L(x) \otimes \alpha)\otimes \H^0(X,M\otimes \phi_M(x) \otimes \alpha^{-1})\rightarrow \H^0(X,L\otimes M) \end{equation} arising from elements $\alpha$ of $\Pic^0(X)$
\cite[\S 3, p. 62--70]{Mum:Quad:Eqns}.
 
The lemma on \cite[p. 68]{Mum:Quad:Eqns} implies that the image of the map \eqref{sum-ample-cp-prod} 
is (nonzero and) stable under the action of $\G(L\otimes M)$.  Since $\H^0(X,L\otimes M)$ is an irreducible $\G(L\otimes M)$-module the map  \eqref{sum-ample-cp-prod} is surjective. 

Using the surjectivity of the map \eqref{sum-ample-cp-prod}, Mumford proved that if $n, m \geq 4$ then  $\cup(L^n,M^m)$ is surjective \cite[Theorem 9, p. 68]{Mum:Quad:Eqns}.  This statement was later improved.  For example,   Koizumi proved that if $n\geq 2$ and $m\geq 3$ then $\cup(L^n,M^m)$ is surjective \cite[Theorem 4.6, p. 882]{Koi}.

\subsubsection{Cup-product maps, theta groups, and line bundles with nonzero index}\label{theta:positive:index}
Compare the situation of \S \ref{ample:theta} with that of \S \ref{main:example}.  Specifically, \S \ref{first:examples} shows that we cannot expect to obtain general results to the effect that $\cup(L^n,M^m)$ is surjective for specific positive integers $n$ and $m$ independent of the pair $(L,M)$ satisfying the pair index condition.

Also note that if $(L,M)$ is a pair of line bundles on $X$ which satisfies the pair index condition and if  both $\ii(L)$ and $\ii(M)$ are positive then they cannot be algebraically equivalent.  

In spite of these issues, we can still use some aspects of the approach described in \S \ref{ample:theta} to gain insight into the nature of cup-product problems arising from non-degenerate line bundles with nonzero index.  

For example, fix a pair $(L,M)$ of line bundles on $X$ which satisfies the pair index condition.  If   $\alpha$ is an element of $\Pic^0(X)$, if $x\in \K(L\otimes M)$, and if $\beta:=\alpha\otimes \phi_L(x)=\alpha \otimes \phi_M(x)^{-1}$ then, by choosing isomorphisms, 
$\text{$\phi:L\otimes \beta\rightarrow T^*_x(L\otimes \alpha)$ and $\psi:M\otimes \beta^{-1} \rightarrow T^*_x(M\otimes \alpha^{-1})$,}$
we obtain an isomorphism
$$\phi\otimes \psi: L\otimes M\rightarrow T^*_x(L\otimes \alpha)\otimes T^*_x(M\otimes \alpha^{-1}) =  T^*_x(L\otimes M) \text{.}$$ 
Using this isomorphism we obtain a commutative diagram 
$$ 
\xymatrix{ \H^{\ii(L)}(X,L\otimes \beta) \otimes \H^{\ii(M)}(X,M\otimes \beta^{-1}) \ar[r]^-{\cup(L\otimes \beta,M\otimes \beta^{-1})} \ar @{=} [d]_-{\H^{\ii(L)}(\phi)\otimes \H^{\ii(M)}(\psi)} & \H^{\ii(L\otimes M)}(X,L\otimes M) \ar @{=} [d] ^ - {\H^{\ii(L \otimes M)}(\phi \otimes \psi)} \\
\H^{\ii(L)}(X,T^*_x(L\otimes \alpha)) \otimes \H^{\ii(M)}(X,T^*_x(M\otimes \alpha^{-1})) 
\ar @{=} [d] _ - { \H^{\ii(L)}( T^*_{-x})\otimes \H^{\ii(M)}( T^*_{-x} )} & \H^{\ii(L\otimes M)}(X,T^*_x(L\otimes M)) \ar @{=}[d] ^ - { \H^{\ii(L\otimes M)}(T_{-x}^*)}\\
\H^{\ii(L)}(X,L\otimes \alpha) \otimes \H^{\ii(M)}(X,M\otimes \alpha^{-1}) \ar[r]^-{\cup(L\otimes \alpha, M\otimes \alpha^{-1})} & \H^{\ii(L\otimes M)}(X,L\otimes M)
}
$$
from which we deduce $$\dim \image \cup(L \otimes \alpha,M\otimes \alpha^{{-1}}) = \dim \image \cup(L\otimes \phi_L(x)\otimes \alpha,M\otimes \phi_M(x) \otimes \alpha^{{-1}}) \text{.}$$  
In other words, the dimension of the image of the map $\cup(L \otimes \alpha, M \otimes \alpha^{{-1}})$ is the same for all elements of $\Pic^0(X)$ in the orbit of $\alpha$ under the action of $\K(L\otimes M)$ on $\Pic^0(X)$ defined by 
$x \cdot \alpha := \phi_L(x)\otimes \alpha$.

Moreover, by using the fact that $\H^{\ii(L\otimes M)}(X,L\otimes M)$ is an irreducible $\G(L\otimes M)$-module, we can deduce, in a manner similar to \S \ref{ample:theta}, that if the cup-product map $\cup(L \otimes \alpha,M\otimes \alpha^{{-1}})$ is nonzero then the natural map  
\begin{equation}\label{sum:map} \bigoplus_{x\in \K(L\otimes M)} \H^{\ii(L)}(X,L\otimes \phi_L(x) \otimes \alpha)\otimes \H^{\ii(M)}(X,M\otimes \phi_M(x) \otimes \alpha^{{-1}})\rightarrow \H^{\ii(L\otimes M)}(X,L\otimes M)\end{equation} is nonzero and surjective.
Note,
\S \ref{zero:map}
shows that the map \eqref{sum:map} can be the zero map.  

On the other hand, by Corollary \ref{cor:Par:prop}, there exists $\alpha \in \Pic^0(X)$ for which the cup-product map $\cup(L \otimes \alpha, M \otimes \alpha^{-1})$ is nonzero whence the map \eqref{sum:map} is surjective.  

In more detail, Corollary \ref{cor:Par:prop} (a) implies that there exists an $x \in X$ with the property that  $\cup(T^*_x L,M)$ is nonzero.  Since $L\otimes M$ is non-degenerate there exists $y \in X$ for which $T^*_y(T^*_x L \otimes M) = L \otimes M$ and, hence, there exists a commutative diagram
$$\xymatrixcolsep{5pc}\xymatrix{\H^{\ii(L)}(X, T^*_x L) \otimes \H^{\ii(M)}(X,M) \ar[r]^-{\cup(T^*_x L,M)} \ar @{=} [d]_{\H^{\ii(L)}(T^*_y) \otimes \H^{\ii(M)}(T^*_y)} & \H^{\ii(L\otimes M)}(X, T^*_xL\otimes M) \ar @{=} [d]_{\H^{\ii(L\otimes M)}(T^*_y)} \\
\H^{\ii(L)}(X,L\otimes \alpha) \otimes \H^{\ii(M)}(X, M \otimes \alpha^{-1}) \ar[r]^-{\cup(L\otimes \alpha, M\otimes \alpha^{-1})} & \H^{\ii(L\otimes M)}(X,L\otimes M)
 }$$ where $\alpha$ is an appropriate element of $\Pic^0(X)$.  Since the top horizontal arrow is nonzero the same is true for the bottom horizontal arrow.  Hence, the map $\cup(L \otimes \alpha, M \otimes \alpha^{-1})$ is nonzero so that the map \eqref{sum:map} is surjective.  
 
 \bibliographystyle{amsplain}

\end{document}